\def\l{\langle}
\def\r{\rangle}
\def\ov{\overline}
\newcommand{\qd}[1]{{ \fbox{$ \boldsymbol{#1} $} }} 
\def\ZZ{\mathbb Z}
\def\la{\lambda}  \def\m{\mu}
\def\Aut{{\sf Aut}} \def\Cos{{\sf Cos}}
\def\A{{\rm A}} \def\S{\rm S}
\def\PSL{{\rm PSL}}
\def\a{\alpha}
\def\b{\beta}
\def\C{{\bf C}}
\def\N{{\bf N}}
\def\D{{\rm D}}
\newtheorem{thm}{Theorem}[section]
\newtheorem{construction}[thm]{Construction}
\newtheorem{theorem}[thm]{Theorem}
\newtheorem{lemma}[thm]{Lemma}
\newtheorem{corollary}[thm]{Corollary}
\newtheorem{problem}[thm]{Problem}
\newtheorem{example}[thm]{Example}
\theoremstyle{definition}
\newtheorem{rem}[thm]{Remark}
\def\qed{\nopagebreak\hfill{\rule{4pt}{7pt}}
	\medbreak}
\def\Ga{{\it\Gamma}}
\def\Sig{{\it\Sigma}}
\def\Sp{{\rm Sp}}
\def\Sz{{\rm Sz}}
\def\mod{{\sf mod\ }}
\def\S{{\mathrm{S}}}
\def\A{{\mathrm{A}}}
\def\Sym{{\mathrm{Sym}}}
\def\Alt{{\mathrm{Alt}}}
\def\Cos{{\mathrm{Cos}}}
\def\leq{\leqslant}
\def\le{\leqslant}
\def\geq{\geqslant}
\def\ge{\geqslant}
\begin{document}

	\title{Non-solvable $2$-arc-transitive covers of Petersen graphs}

	\author{ Jiyong Chen}
	\author{Cai Heng Li}
	\author{ Ci Xuan Wu}
        \author{ Yan Zhou Zhu}
	
	\address{J.  Chen, School of Mathematics Science \\
		Xiamen University \\
		Xiamen 361005\\
		P. R. China}
	\email{chenjy1988@xmu.edu.cn}
	
	\address{C. H. Li, School of Mathematics Science \\
		Southern University of Science and Technology \\
		Shenzhen 518055\\
		P. R. China}
	\email{lich@sustech.edu.cn}
	
	\address{C. X. Wu, School of Statistics and Mathematics \\
		Yunnan University of Finance and Economics\\
		Kunming 650021\\
		P. R. China}
	\email{wucixuan@gmail.com}

    \address{Y. Z. Zhu, School of Mathematics Science \\
		Southern University of Science and Technology \\
		Shenzhen 518055\\
		P. R. China}
	\email{zhuyz@mail.sustech.edu.cn}
	
	\date\today

\begin{abstract}
We construct connected $2$-arc-transitive covers of the Petersen graph with non-solvable transformation groups, solving the long-standing problem for the existence of such covers.

	\bigskip
	\noindent{\bf Key words:} $2$-arc-transitive, non-solvable cover, minimal cover\\
	\noindent{\bf 2000 Mathematics subject classification:} 05C38, 20B25	
\end{abstract}
		\maketitle

\section{Introduction}

Given a graph $\Gamma=(V,E)$, an \emph{$s$-arc} of $\Gamma$ is a sequence of $s+1$ vertices such that any two consecutive vertices are adjacent and any three consecutive vertices are distinct.
For a group $G$ of automorphisms, a graph $\Gamma$ is said to be \emph{$(G,s)$-arc-transitive} (or simply \emph{$s$-arc-transitive}) if $G$ acts transitively on the set of all $s$-arcs of $\Gamma$.

Finite $s$-arc-transitive graphs form an important class of graphs, and have received considerable attention in the literature.
In particular, from the local action analysis, Weiss \cite{Weiss} proved that there exists no 8-arc-transitive graph except for cycles;
from the global action analysis, Praeger~\cite{Praeger1} initiated a program for the study the class of $2$-arc-transitive graphs, based on taking normal quotient defined below.

Let $\Gamma=(V,E)$ be a $(G,s)$-arc-transitive graph with $s\ge2$.
Let $\mathcal{P}$ be a partition of the vertex set $V$.
Define the \emph{quotient graph} $\Gamma_{\mathcal{P}}$ of $\Gamma$ to be the graph with vertex set $\mathcal{P}$ and two vertices $P_1, P_2$ are adjacent if and only if there exist $u\in P_1$ and $v\in P_2$ which are adjacent in $\Gamma$.
For any normal subgroup $N\lhd G$ which is intransitive on $V$, the $N$-orbits on $V$ form a partition of $V$, and the corresponding quotient graph is called the {\it normal quotient} of $\Gamma$ induced by $N$, denoted by $\Gamma_N$.

It was shown by Praeger~\cite{Praeger1} that if $\Gamma$ is $(G,2)$-arc-transitive and $N$ has at least 3 orbits on $V$, then $\Gamma_N$ is $(G/N,2)$-arc-transitive, and $\Gamma$ and $\Gamma_N$ have the same valency.
In this case, $\Gamma$ is called a {\it cover} of $\Gamma_N$.
To emphasize on the normal subgroup $N$, the graph $\Gamma$ is called a {\it $N$-cover} of $\Gamma_N$, and $N$ is called the {\it transformation group} of this cover.
In other words, the class of 2-arc-transitive graphs is closed under taking normal quotient.
Further, if $\Ga$ is non-bipartite graph and $N\lhd G$ is maximal subject to that $N$ has at least 3 orbits, then each non-identity normal subgroup of $G/N$ is transitive on the vertex set of $\Ga_N$, that is, $G/N$ is {\it quasiprimitive}.

If $\Gamma=(V,E)$ is a $(G,2)$-arc-transitive graph and $G$ is quasiprimitive, then $\Gamma$ has no non-trivial normal quotient, and $\Ga$ is called a quasiprimitive 2-arc-transitive graph.
Based on this observation, Praeger~\cite{Praeger1} established her program to study the class of non-bipartite 2-arc-transitive graphs in two steps:
\begin{enumerate}
	\item characterize quasiprimitive $(G,2)$-arc-transitive graphs;
	\item construct 2-arc-transitive covers of quasiprimitive 2-arc-transitive graphs.
\end{enumerate}

Step~(1) of the program has been fruitful.
Praeger \cite{Praeger1} gave a systematic description about minimal 2-arc-transitive graphs.
In particular, it was shown by Praeger \cite{Praeger1} that if $\Ga=(V,E)$ is $(G,2)$-arc-transitive and $G^V$ is quasiprimitive, then $G$ is of type HA, TW, AS or PA (four out of eight types of quasiprimitive groups).
Further, type~HA is determined in \cite{IP};
type~TW is characterized in \cite{Baddeley};
special cases for type~AS are done in \cite{FP1, FP2,HNP};
examples for type~PA are constructed in \cite{PA-1,LLW}. 
The program has been also successfully applied to characterize some important families of $s$-arc-transitive graphs, see \cite{Giudici,Li1,EP-s,Lu1}.
However, Step~(2) of the program is progressing very slowly, and the existence problem of 2-arc-transitive covers of 2-arc-transitive graphs with non-solvable transformation groups has remained open since then.

There have been efforts in the study of $s$-arc-transitive covers of given $s$-arc-transitive graphs.
J.H. Conway provided a construction of such covers, led to a remarkable result that each connected $s$-arc-transitive graph has a non-trivial connected $s$-arc-transitive cover, refer to \cite[Chapter 19]{Biggs-book}.
However, it is generally hard to construct $s$-arc-transitive $N$-covers $\Gamma$ for a given $(G,s)$-arc-transitive graph $\Sigma$, due to the fact that it is hard to identify a group extension $N.G$ of $N$ by $G$ such that $N.G$ has a connected $s$-arc-transitive graph.
For convenience, if $N$ is a minimal normal subgroup of $N.G$, we say $\Gamma$ to be a {\it minimal normal cover} (or simply a {\it minimal cover}) of $\Sigma$.
Naturally, one should begin with {\it characterizing $s$-arc-transitive minimal covers of minimal $s$-arc-transitive graphs}.

Let $\Sigma=(V,E)$ be a connected minimal $(G,s)$-arc-transitive graph which is not a cycle, and let $\Gamma$ be a connected minimal $s$-arc-transitive cover of $\Sigma$.
Then there exists a group $X=N.G$ such that $N$ is a minimal normal subgroup of $X$ and $G=X/N$ is quasiprimitive on $V$.
Then, by~\cite{ZZ}, one of the following holds: 
\begin{enumerate}[(a)]
\item $N\C_X(N)=N\times M\lhd X$, where $M$ is isomorphic to the socle of $G$;
\item $G$ is faithful on $N$, and $N=T^\ell$ with $T$ simple and $\ell\geq3$.
\end{enumerate}

The problem of characterizing part~(a) is addressed in~\cite{ZZ}, see Example~\ref{ex:diag} for some examples.
In this paper, we focus on case~(b) by studying the following problem.

\begin{problem}\label{prob:cover}
{\rm
Given a connected $(G,s)$-arc-transitive graph $\Sigma$, determine groups $X=N.G$ such that
\begin{enumerate}[(i)] 
\item $N=T^\ell$ with $\ell\geq3$ is the unique minimal normal subgroup of $X$, and
\item there exists a connected $(X,s)$-arc-transitive graph $\Gamma$ which is a $N$-cover of $\Sigma$.
\end{enumerate}
}
\end{problem}

In the literature, there have been lots of publications devoted to the study of covers of $s$-arc-transitive graphs with solvable transformation groups, most of which are with abelian groups of small ranks, refer to 
\cite{DXY, XDKX,CM1,CM2,Ele_abel_cover,semi_ele_cover,dih_cyclic_cubic_2p,penta_graph,metacyc_cover,ele_cover_pappus}. 

In this paper, we construct $2$-arc-transitive covers of the Petersen graph, with non-solvable transformation groups.
For convenience, such covers are called {\it non-solvable covers}.
To the best of our knowledge, these are the first known non-solvable covers of $2$-arc-transitive graphs.




Before stating the main result, we introduce two definitions.
A group $G$ is called \textit{$(2,3)$-generated} if $G=\braket{x,y}$ with $|x|=2$ and $|y|=3$, and called $\textit{perfect}$ if $G=G'$, the commutator subgroup of $G$.

\begin{theorem}\label{Main Theorem}
Let $P$ be a $\{2,3\}$-generated perfect group $P$, and let $G=P\wr\A_5=P^5{:}\A_5$.
There exists a connected $(G,2)$-arc-transitive graph $\Ga$ which is a cover of the Petersen graph.
Moreover, the following properties hold:
	\begin{itemize}
		\item[(i)]$\Aut(\Gamma)=G$;
		\item[(ii)] $\Gamma$ is a non-Cayley graph;
		\item [(iii)] $\Gamma$ has girth $10$.
	\end{itemize}
    
\end{theorem}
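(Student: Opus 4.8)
The plan is to realize $\Ga$ explicitly as a coset graph. Recall that the Petersen graph $\Sigma$ is the coset graph of $\A_5$ with respect to a point stabilizer $H\cong\S_3$ and an edge-reversing element; concretely, $\A_5$ acts $2$-arc-transitively on $\Sigma$ with vertex stabilizer $\S_3$ and the stabilizer of a $2$-arc being of order $1$, so that $\Sigma=\Cos(\A_5,H,HgH)$ for a suitable involution $g$ normalizing a common subgroup of order $2$. To build $\Ga$, I would exhibit inside $G=P^5{:}\A_5$ a subgroup $K$ with $K\cap P^5$ trivial (so $K\cong\S_3$ projects isomorphically onto a vertex stabilizer of $\Sigma$) and an involution $z\in G$ with $z^2=1$, $z\notin K$, $\langle K,z\rangle=G$, and $|K:K\cap K^z|=3$, matching the local structure of the Petersen graph; then set $\Ga=\Cos(G,K,KzK)$. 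The covering projection $\Ga\to\Sigma$ is then induced by $G\to G/P^5=\A_5$, and since $P^5$ has more than $2$ orbits on $V\Ga$ (as $|P|\ge 2$ forces $|V\Ga|=|G:K|=|P|^5\cdot 10>30$) and $P^5\lhd G$, Praeger's normal-quotient result quoted in the introduction gives that $\Ga$ is a genuine $P^5$-cover and is $(G,2)$-arc-transitive once we know $G$ is transitive on $2$-arcs; the latter follows from $\langle K,z\rangle=G$ and the $2$-arc stabilizer being trivial. The one delicate point here is choosing $z$ so that $\langle K,z\rangle$ is all of $G$ rather than a proper subgroup — this is exactly where the hypotheses that $P$ be perfect and $\{2,3\}$-generated enter: writing $P=\langle a,b\rangle$ with $|a|=2$, $|b|=3$, one builds $z$ and $K$ from diagonal-type and "twisted" elements whose components involve $a$ and $b$, and perfectness of $P$ is what prevents $\langle K,z\rangle$ from being trapped in a subgroup of the form $Q^5{:}\A_5$ with $Q<P$ or in $D^5{:}\A_5$ for a proper subgroup. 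I expect verifying $\langle K,z\rangle=G$ to be the main obstacle, and it will likely require a careful case analysis of the possible overgroups of $\A_5$-type inside $P\wr\A_5$ together with the observation that the projections of $\langle K,z\rangle$ to the five coordinates of $P^5$ are permuted transitively and each is a subgroup of $P$ containing conjugates of both $a$ and $b$, hence all of $P$.

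For connectedness of $\Ga$, which is equivalent to $\langle K,z\rangle=G$, nothing further is needed once the previous step is done. For the girth statement (iii), I would first note that the Petersen graph has girth $5$, so $\Ga$ has girth a multiple of... no — covers can only have girth at least that of the base up to the covering behaviour, so girth$(\Ga)\ge 5$; one shows there is no $5$-cycle by checking that the relevant product of six coset-graph generators (the word around a putative $5$-cycle, read in $G$) does not lie in $K$ — concretely, a closed walk of length $5$ in $\Ga$ corresponds to a relation $z h_1 z h_2 z h_3 z h_4 z h_5\in K$ with $h_i\in K$ representing the non-backtracking choices, and the $P^5$-component of such a word is a nontrivial element of $P^5$ by a direct computation using $|a|=2$, $|b|=3$, so no $5$-cycle lifts. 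A $5$-cycle in $\Sigma$ does lift to a $10$-cycle (go around twice, or rather: the preimage of a $5$-cycle is a union of cycles whose lengths are multiples of $5$, and one checks the order of the corresponding element of $P^5$ is exactly $2$, giving a $10$-cycle), and one rules out lengths $6,7,8,9$ similarly by a finite check of non-backtracking closed words, so girth$(\Ga)=10$.

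For (i), that $\Aut(\Ga)=G$: since $\Ga$ is a connected $(G,2)$-arc-transitive $P^5$-cover of the Petersen graph and $G=\Aut(\Ga)$ would follow if $G$ is already the full automorphism group; I would argue that $A:=\Aut(\Ga)\ge G$ and that $A$ is again $2$-arc-transitive with the same normal quotient structure. Since the Petersen graph has $\Aut(\Sigma)=\S_5$ and $G$ has index $2$ in $\S_5$ modulo $P^5$, the only way $A$ could be larger is if $P^5\lhd A$ with $A/P^5\cong\S_5$, or if there is a larger transformation group; the former is excluded because $\S_5$ is not a quotient of any $2$-arc-transitive extension respecting the $\S_3$ vertex stabilizer with trivial $2$-arc stabilizer (the local analysis of the Petersen graph forces the vertex stabilizer in any $2$-arc-transitive group to be $\S_3$, of order $6$, and $|A_v|=|A:G|\cdot 6$), and the latter (a strictly larger normal subgroup than $P^5$, hence an overgroup $R$ with $P^5<R\lhd A$ intransitive) is excluded by the minimality built into the construction — $P^5$ is the unique minimal normal subgroup of $G$ and any $A>G$ would have to normalize $P^5$, forcing $A\le\N_{\Sym(V\Ga)}(P^5)$, which one computes to be $P\wr\S_5$; intersecting with the requirement that $A$ cover the Petersen graph and act $2$-arc-transitively pins $A$ down to $G$. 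Finally (ii), that $\Ga$ is non-Cayley: if $\Ga$ were a Cayley graph on a group $R$ then $R\le\Aut(\Ga)=G$ would be regular of order $|P|^5\cdot 10$, i.e.\ $|G:R|=6$; but $G=P^5{:}\A_5$ has no subgroup of index $6$ (equivalently, no transitive action of degree $6$) unless $P^5$ meets it nontrivially in a way incompatible with $\A_5$ having no subgroup of index less than $5$ other than index $1$, and a short argument on the possible core-free subgroups of index $6$ in $P\wr\A_5$ — which would induce a homomorphism $G\to\S_6$ with kernel of index dividing $720$, impossible for $|P|\ge 2$ since $P^5$ is the unique minimal normal subgroup and $720$ is not divisible by $|P|^5$ in general, and when it is one checks directly — shows no such $R$ exists. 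Of these, step (i) is the second main obstacle after the connectedness verification, as it requires knowing $\N_{\Sym(V\Ga)}(P^5)$ precisely and ruling out all intermediate $2$-arc-transitive groups.
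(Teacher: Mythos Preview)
Your outline matches the paper's strategy at the level of ``build a coset graph in $P\wr\A_5$ lifting the $\A_5$-coset description of the Petersen graph, then verify connectedness, automorphism group, non-Cayley, and girth'', but two of the four verifications have genuine gaps.

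\textbf{Connectedness.} Your sketch says the projections of $\langle K,z\rangle\cap P^5$ to the five coordinates are transitively permuted and each contains conjugates of $a,b$, hence equals $P$. That only shows surjectivity onto each single factor; it does not give $\langle K,z\rangle\cap P^5=P^5$. A subdirect subgroup of $P^5$ projecting onto every factor can be far smaller than $P^5$ (a diagonal, for instance). The paper's argument is substantially more delicate: it first proves the commutator identity $P=\langle[\mu,\lambda],[\mu,\lambda^2]\rangle$, then computes an explicit element $(h_1g)^5\in P^5$ and uses it to show surjectivity onto $P_i\times P_j$ for $i\neq j$, and finally runs a minimal-counterexample induction on $P$ (if $P$ is smallest with $\langle K,z\rangle\neq G$, quotient by any minimal normal $M\lhd P$ and derive a contradiction from $Z(P)=1$). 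Perfectness is used not to ``escape $Q^5{:}\A_5$'' directly but to force $N'=N$ in the inductive step and to make the commutator-generation lemma work.

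\textbf{Automorphism group.} Your claim that ``the local analysis of the Petersen graph forces the vertex stabilizer in any $2$-arc-transitive group to be $\S_3$'' is false: by Tutte's theorem a cubic graph can be $3$-arc-transitive with vertex stabilizer $\S_3\times\S_2$ of order $12$, and the Petersen graph itself is $(\S_5,3)$-arc-transitive. So the case $A/P^5\cong\S_5$ (equivalently $|A:G|=2$, $\Gamma$ is $3$-arc-transitive) is exactly the live possibility, not one you can dismiss on local grounds. The paper rules it out by a different mechanism: if such an index-$2$ extension existed, an element of $A_{(u,v)}\setminus G$ would conjugate $g$ to $gh_2$, forcing $\C_{P^5}(g)\cong\C_{P^5}(gh_2)$; an explicit computation gives $\C_{P^5}(g)\cong P^3$ but $\C_{P^5}(gh_2)\cong P^2\times\C_P(\mu)$, and since $P$ is non-abelian these differ. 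Your normalizer argument $A\le\N_{\Sym(V\Gamma)}(P^5)=P\wr\S_5$ would, even if established, still leave the $P\wr\S_5$ case open, which is precisely the case at issue.

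Your non-Cayley argument (no index-$6$ subgroup via the core map to $\S_6$) and your girth strategy (rule out closed non-backtracking words of length $\le 9$, exhibit a $10$-cycle) are essentially what the paper does; the paper makes the girth check explicit with a MAGMA-assisted table and uses the fact that certain commutators like $[\lambda,\lambda^\mu]$ are nontrivial in any non-abelian $(2,3)$-generated $P$.
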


\begin{rem}
    In Construction~\ref{cons-1}, we outline the detailed construction of the graph $\Gamma$ presented in  Theorem~\ref{Main Theorem}. This construction need a $(2,3)$-generated perfect group $P$.

\begin{enumerate}[(1)]
\item By Liebeck and Shalev~\cite{23-GEN}, most finite non-abelian simple groups are $(2,3)$-generated, more precisely, except for $\Sz(q)$, $\Sp(4,q)$ with $q$ even, and finitely many other groups, every simple group is $(2,3)$-generated.
See Example~\ref{const-PSL} for concrete examples.

\item If $T$ is $(2,3)$-generated and $N.T=\ZZ_p^\ell.T$ is non-split extension, where $p>3$ and $N$ is minimal normal in $N.T$, then $N.T$ is $(2,3)$-generated.

\item If $T_i$ with $1\leq i\leq d$ are pairwise non-isomorphic simple $\{2,3\}$-groups, then so is the product $T_1\times\dots\times T_d$, as shown in Example~\ref{DirectProductOfSimple}.

\item Construction~\ref{cons-1} can be repeated, namely, if $P$ is perfect and $(2,3)$-generated, then by~Lemma~\ref{cubic-connected}, so are $P\wr\A_5$, $(P\wr\A_5)\wr\A_5$, and so on.
\end{enumerate}
\end{rem}

The following corollary is an immediate consequence of Theorem~\ref{Main Theorem}.

\begin{corollary}
There exist infinitely many connected $2$-arc-transitive non-solvable minimal covers of the Petersen graph.
\end{corollary}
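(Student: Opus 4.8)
The plan is to deduce the corollary directly from Theorem~\ref{Main Theorem} by exhibiting an infinite family of suitable groups $P$. First I would note that it suffices to produce infinitely many pairwise non-isomorphic $\{2,3\}$-generated perfect groups $P$; then, applying Theorem~\ref{Main Theorem} to each such $P$, we obtain a connected $(G,2)$-arc-transitive cover $\Gamma$ of the Petersen graph with $G=P\wr\A_5=P^5{:}\A_5$, and by part~(i) of the theorem, $\Aut(\Gamma)=G$. Since distinct choices of $P$ yield pairwise non-isomorphic groups $G$ (as $P$ can be recovered from $G$, for instance as a composition factor multiplicity count or via the structure of the socle), the resulting graphs $\Gamma$ are pairwise non-isomorphic.

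Next I would verify that each such cover is a non-solvable minimal cover. Non-solvability is immediate: $P$ is perfect and non-trivial, hence non-solvable whenever $P\neq 1$ — and indeed we take $P$ non-abelian simple or built from such — so $N=P^5$ is a non-solvable transformation group. For minimality in the sense defined in the excerpt, one takes $P$ to be non-abelian simple, so that $N=P^5$ is a minimal normal subgroup of $G=P^5{:}\A_5$; this places us squarely in case~(b)/Problem~\ref{prob:cover} with $T=P$ and $\ell=5\geq 3$, and $\Gamma$ is then a minimal cover by definition. To get infinitely many such $P$, I would invoke the remark following Theorem~\ref{Main Theorem}: by Liebeck and Shalev~\cite{23-GEN}, all but finitely many finite non-abelian simple groups are $(2,3)$-generated, so for instance $\PSL(2,p)$ for all sufficiently large primes $p$ gives an infinite supply of pairwise non-isomorphic $(2,3)$-generated simple (hence perfect) groups.

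Finally I would assemble these observations: for each prime $p$ in an infinite set such that $\PSL(2,p)$ is $(2,3)$-generated, set $P=\PSL(2,p)$ and $G=P\wr\A_5$; Theorem~\ref{Main Theorem} produces a connected $(G,2)$-arc-transitive cover $\Gamma_p$ of the Petersen graph with $\Aut(\Gamma_p)=G$; the graph $\Gamma_p$ is non-solvable and minimal; and the $\Gamma_p$ are pairwise non-isomorphic because their automorphism groups have distinct orders $|\PSL(2,p)|^5\cdot 60$. This yields infinitely many connected $2$-arc-transitive non-solvable minimal covers of the Petersen graph.

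I do not expect a genuine obstacle here, since the corollary is essentially a packaging of Theorem~\ref{Main Theorem} together with the cited $(2,3)$-generation results; the only point requiring a line of care is the pairwise non-isomorphism of the covers, which follows from part~(i) of the theorem by comparing the orders (or composition factors) of the automorphism groups, and the verification that "minimal cover" applies, which needs $P$ simple so that $P^5$ is minimal normal in $P^5{:}\A_5$.
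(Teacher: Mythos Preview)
Your proposal is correct and matches the paper's intended argument: the paper gives no explicit proof, stating only that the corollary is an immediate consequence of Theorem~\ref{Main Theorem}, and the surrounding remarks point to precisely the ingredients you use (Liebeck--Shalev for an infinite supply of $(2,3)$-generated simple groups, and part~(i) to distinguish the resulting covers). Your extra care in checking that $P^5$ is minimal normal in $P\wr\A_5$ when $P$ is non-abelian simple, and in using $\Aut(\Gamma_p)=G$ to separate the covers, is exactly what is needed to make ``immediate'' rigorous.
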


It was shown in \cite{CLWZ:Irr-covers} that, for a prime $p>3$ and a sufficiently large integer $n$ such that $\gcd(p(p-1),n)=1$, the group $P$ discussed in Example~\ref{NonsplitExtension} is a non-split extension of $\ZZ_p^{n^2-1}$ by $\PSL(n,p)$. Moreover, $P$ is a $(2,3)$-generated perfect group.

\begin{corollary}\label{coro-nonsplit}
Let $P=\ZZ_p^{n^2-1}{.}\PSL(n,p)$ be a non-split extension with $p>3$ and $n$ sufficiently large. 
Then there exists a $(P\wr\A_5,2)$-arc-transitive graph which is a cover of the Petersen graph.
\end{corollary}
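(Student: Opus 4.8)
The plan is to verify that the group $P=\ZZ_p^{n^2-1}{.}\PSL(n,p)$ satisfies the hypotheses of Theorem~\ref{Main Theorem} — namely that it is perfect and $\{2,3\}$-generated — and then to invoke Theorem~\ref{Main Theorem} directly with this $P$ and $G=P\wr\A_5$.

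First I would pin down the parameters. Since $p>3$ is fixed and $n$ is free to be chosen large, I take $n$ (for instance a prime larger than $p$) so that in addition $\gcd(p(p-1),n)=1$; such $n$ are abundant, so we may assume this coprimality condition holds alongside $n$ being large. With this choice the result cited from \cite{CLWZ:Irr-covers}, recalled in the paragraph preceding the corollary, applies: the group $P$ of Example~\ref{NonsplitExtension} is a genuine non-split extension of the $\PSL(n,p)$-module $\ZZ_p^{n^2-1}$ by $\PSL(n,p)$, and moreover $P$ is perfect and $(2,3)$-generated.

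If one prefers not to lean on \cite{CLWZ:Irr-covers} for the last sentence, both facts can be reproved directly. The module $\ZZ_p^{n^2-1}$ is the adjoint module $\mathfrak{sl}_n$, which is irreducible exactly because $p\nmid n$; hence $\ZZ_p^{n^2-1}$ is the unique minimal normal subgroup of $P$. Now $\PSL(n,p)$ is $(2,3)$-generated for $n$ large by Liebeck--Shalev~\cite{23-GEN}, so item~(2) of the Remark following Theorem~\ref{Main Theorem} yields that $P$ is $(2,3)$-generated. For perfectness, $P/\ZZ_p^{n^2-1}\cong\PSL(n,p)$ is perfect, so $P=P'\ZZ_p^{n^2-1}$; moreover $P'\cap\ZZ_p^{n^2-1}$ is a $\PSL(n,p)$-submodule of $\ZZ_p^{n^2-1}$, and it cannot be trivial, since otherwise $P=P'\ZZ_p^{n^2-1}$ with $P'\cap\ZZ_p^{n^2-1}=1$ would split the extension; hence $\ZZ_p^{n^2-1}\le P'$ and so $P=P'$.

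With $P$ now known to be perfect and $\{2,3\}$-generated, Theorem~\ref{Main Theorem} applied to $P$ produces a connected $(P\wr\A_5,2)$-arc-transitive graph which is a cover of the Petersen graph, which is exactly the assertion. There is no genuine obstacle in this last step: the real work lies in \cite{CLWZ:Irr-covers} (constructing the non-split extension and establishing its $(2,3)$-generation) and in Theorem~\ref{Main Theorem} (the cover construction), and the only point requiring care is matching hypotheses — checking that the range of $n$ for which the non-split extension exists and is $(2,3)$-generated is consistent with $n$ being sufficiently large.
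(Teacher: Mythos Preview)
Your proposal is correct and follows essentially the same approach as the paper: the paper deduces the corollary immediately from Theorem~\ref{Main Theorem} after noting (via \cite{CLWZ:Irr-covers} and Example~\ref{NonsplitExtension}) that $P$ is a $(2,3)$-generated perfect group. Your additional self-contained arguments for perfectness and $(2,3)$-generation are sound but go beyond what the paper itself supplies.
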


We remark that for a minimal abelian cover of the Petersen graph the transformation group has rank at most 6, see \cite{CM1}. 
In contrast, the rank of an abelian minimal normal subgroup of the transformation group of the cover given in Corollary~\ref{coro-nonsplit} can be arbitrarily large.

As demonstrated by J.H. Conway's construction~\cite[Chapter 19]{Biggs-book},  each connected $s$-arc-transitive graph has a minimal solvable $s$-arc-transitive cover.
We have shown that  the Petersen graph have minimal non-solvable $2$-arc-transitive covers.
This leads to the following natural problem.
\begin{problem}
    Given a connected $(G,s)$-arc-transitive graph $\Sigma$, does there exist a connected $(X,s)$-arc-transitive graph $\Gamma$ which is a minimal non-solvable cover of $\Sigma$, where $s\geqslant 2$, and $X=T^m.G$ for some non-abelian simple group $T$ and positive integer $m$?
\end{problem}

\section{Basic definitions and some examples}

We collect some definitions and preliminary results in the section, associated with some important examples.

Each arc-transitive graph can be expressed as a coset graph.
Let $\Ga=(V,E)$ be a $G$-arc-transitive graph.
Fix an arc $(\a,\b)$, and an element $g\in G$ which interchanges $\a$ and $\b$, so that $g\in\N_G(G_{\a\b})$.
Since $G$ is transitive on the vertices, $V$ can be identified with the set of right cosets 
$[G:G_\a]=\{G_\a x\mid x\in G\}$ such that the action of $G$ on $V$ is equivalent to the coset action, for any $y\in G$, 
\[y:\ G_\a x\mapsto G_\a xy,\ \mbox{where $x\in G$}.\]
Identify $G_\a$ with $\a$ and $G_\a g$ with $\b$, so that $\Ga(\a)=\{G_\a gh\mid h\in G_\a\}$.
It follows that the adjacency relation is determined by 
\[G_\a x\sim G_\a y\Longleftrightarrow yx^{-1}\in G_\a gG_\a.\]
Then, in this way, $\Ga$ is denoted by $\Cos(G,G_\a, G_\a gG_\a)$, called a coset graph.

A well-known result about $s$-arc-transitive graphs is due to Tutte.

\begin{theorem}{\rm(\cite{Tutte})}\label{Tutte Theorem}
Let $\Ga$ be a cubic $(G,s)$-arc-transitive graph.
Then $s\leq 5$, and $G_\a=\ZZ_3$, $\S_3$, $\S_3\times\S_2$, $\S_4$ or $\S_4\times\S_2$ with $s=1,2,3,4$ or $5$, respectively.
\end{theorem}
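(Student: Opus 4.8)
The final statement to prove is Tutte's theorem: for a cubic $(G,s)$-arc-transitive graph $\Ga$, we have $s\leq 5$, with the vertex stabiliser $G_\a$ determined up to the listed list. My plan is to exploit the rigidity of the local action at a vertex together with a careful count along a path, using the fact that a cubic graph forces every vertex stabiliser to act on a $3$-element neighbour set. First I would set up the local analysis: fix a vertex $\a$ and note that $G_\a$ acts on the neighbourhood $\Ga(\a)$ of size $3$; $s$-arc-transitivity with $s\geq 1$ forces $G_\a^{\Ga(\a)}\leq\S_3$ to be transitive, so it is either $\ZZ_3$ or $\S_3$. This pins down $|G_\a^{\Ga(\a)}|\in\{3,6\}$ and hence gives the starting point of the induction on the length of an arc.

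The heart of the argument is to control the \emph{arc stabiliser} chain. For an $s$-arc $(\a_0,\a_1,\dots,\a_s)$, let $G_{\a_0\a_1\cdots\a_i}$ denote its pointwise stabiliser; I would analyse the successive quotients of the chain
\[
G_{\a_0}\geq G_{\a_0\a_1}\geq G_{\a_0\a_1\a_2}\geq\cdots.
\]
The key local lemma is that the stabiliser $G_{\a_{i-1}\a_i}$ of an arc acts on the two \emph{remaining} neighbours of $\a_i$ (those other than $\a_{i-1}$), and by transitivity on $s$-arcs this action must be transitive on the two continuations whenever $i<s$. Combining this with the structure of the $2$-point stabiliser in $\S_3$ (which fixes one neighbour and swaps the other two, giving a quotient of order $2$), I would derive that each step down the chain multiplies the order by a controlled factor, and that the stabiliser of an $s$-arc eventually becomes trivial by connectedness and faithfulness of $G$ on $V$. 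The crucial structural input, which I would invoke from the local theory, is that the stabiliser $G_{\a}^{[1]}$ fixing $\a$ and all its neighbours is constrained so that the successive amalgam subgroups cannot grow indefinitely; this is precisely where the bound $s\leq 5$ emerges.

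The main obstacle — and the genuinely hard part — is proving that the chain of arc stabilisers must terminate, i.e. that $s$ cannot exceed $5$. This requires showing that the group generated by two adjacent vertex stabilisers $\langle G_{\a},G_{\b}\rangle$ (with $\{\a,\b\}\in E$) is an amalgamated product whose possible isomorphism types are severely limited. Concretely, one must rule out $s\geq 6$ by a parity/commutator argument: the stabilisers $G_{\a}^{[i]}$ of $\a$ together with all vertices at distance $\leq i$ form a descending series of $2$-groups (after passing to the kernel of the $\S_3$-action), and one shows via the commutator relations forced by the two local actions that $|G_{\a}^{[1]}:G_{\a}^{[2]}|\leq 2$ at each admissible step, while $G_{\a}^{[s-1]}=1$. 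Bounding the total $2$-part this way yields $|G_\a|\leq 48$, from which the five cases $|G_\a|\in\{3,6,12,24,48\}$ follow, and a final check identifying the group structure (using that $G_\a$ must contain a faithful transitive action on $\Ga(\a)$ and the derived amalgam relations) gives $G_\a\cong\ZZ_3,\,\S_3,\,\S_3\times\S_2,\,\S_4$, or $\S_4\times\S_2$ respectively. I expect the commutator bookkeeping that forces each quotient to have order at most $2$, rather than possibly larger, to be the step demanding the most care.
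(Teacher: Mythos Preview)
The paper does not contain a proof of this statement: Theorem~\ref{Tutte Theorem} is quoted from Tutte's original work and used as a background result, with no argument supplied. There is therefore nothing in the paper to compare your proposal against.

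For what it is worth, your outline follows the classical route (Tutte 1947/1959, later streamlined by Sims, Djokovi\'c--Miller, and others): analyse the transitive action of $G_\a$ on the three neighbours, pass to the chain of arc stabilisers, show each successive index is at most $2$ once past the first step, use connectedness to force the chain to terminate, and then bound $|G_\a|\le 48$. That is the right shape of argument. The only caution is that your description of the ``main obstacle'' is more of a restatement than a strategy: the actual content lies in proving that for adjacent vertices $\a,\b$ one has $G_\a^{[1]}\cap G_\b^{[1]}=1$ (equivalently, that an element fixing every vertex at distance $\le 1$ from both $\a$ and $\b$ is trivial), and then in the delicate step showing $G_\a^{[1]}$ is already small. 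Your sketch asserts the commutator bookkeeping will do this but does not indicate how; in Tutte's original argument this is a genuinely combinatorial step, not purely group-theoretic, and would need to be spelled out to constitute a proof.
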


We consider the Petersen graph as an example.

\begin{example}\label{ex:Petersen}
{\rm
Let $\Omega=\{1,2,3,4,5\}$, and let $\Omega^{\{2\}}$ be the set of the ten 2-subset of $\Omega$.
Typically, the Petersen graph $\Sig$ is defined as the graph with vertex set $\Omega^{\{2\}}$ such that 
$\a,\b\in\Omega^{\{2\}}$ are adjacent if and only if $\a\cap\b=\emptyset$. 
Fix $\a=\{4,5\}$ and $\b=\{1,2\}$.
The Petersen graph can be expressed as a coset graph.

If we take $G:=\Aut\Sig=\Sym(\Omega)=\S_5$, then $\Sig$ is a $(G,3)$-arc-transitive graph, with stabilizers $G_\a=\Sym\{1,2,3\}\times\Sym\{4,5\}=\S_3\times\S_2$, and $G_{\a\b}=\l(12),(45)\r=\ZZ_2^2$ such that $g=(14)(25)$ interchanges $\a$ and $\b$.
Thus the Petersen graph $\Sig$ can be also written as a coset graph $\Cos(G,G_\a,G_\a gG_\a)$.

On the other hand, if we take $H:=\Alt(\Omega)=\A_5\leq \Aut\Sig$, then $\Sig$ is a $(G,2)$-arc-transitive graph,
with $H_\a=\l (123), (12)(45) \r=\S_3$, and $H_{\a\b}=\l(12)(45)\r=\ZZ_2$ such that $g=(14)(25)$ interchanges $\a$ and $\b$.
Thus the Petersen graph $\Sig$ can be also written as a coset graph $\Cos(H,H_\a,H_\a gH_\a)$.
\qed
}
\end{example}

Conversely, arc-transitive graphs can be constructed as coset graphs from abstract groups.
Let $G$ be a group, $H$ a subgroup of $G$ and $g\in G$. 
Define a \textit{coset graph} $\Ga=\Cos(G,H,HgH)$,
which has vertex set $[G{:}H]$ such that $(Hx, Hy)$ is an arc if and only if $yx^{-1}\in HgH$. 
The following lemma is folklore.
	
\begin{lemma}\label{CosetGraph}
Let $\Ga=\Cos(G, H, HgH)$ be a coset graph, and denote the vertex $H$ by $\a$.
Then $\Ga$ is $G$-arc-transitive and the following statements hold.
\begin{enumerate}[{\rm(i)}]
	\item $\Ga$ is connected if and only if $\braket{H,g}=G$;
	\item $\Ga$ is undirected if and only if $HgH=Hg^{-1}H $;
	\item $\Ga$ is $(G,2)$-arc-transitive if and only if $H$ is $2$-transitive on $[H:H\cap H^g]$.
\end{enumerate}
\end{lemma}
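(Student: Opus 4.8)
The plan is to regard $G$ as a group of automorphisms of $\Ga=\Cos(G,H,HgH)$ through the right-multiplication action on the coset space $[G{:}H]$, and to deduce each item from elementary double-coset bookkeeping. First I would check that the arc relation is $G$-invariant: if $yx^{-1}\in HgH$ then $(yz)(xz)^{-1}=yx^{-1}\in HgH$ for every $z\in G$, so $G\leq\Aut\Ga$, a priori as a digraph. The coset action is transitive with $G_\alpha=H$ for $\alpha=H$, and the out-neighbourhood of $\alpha$ is $\Ga(\alpha)=\{Hy:y\in HgH\}=\{Hgh:h\in H\}$, on which $H$ acts transitively by right multiplication; vertex-transitivity combined with transitivity of $G_\alpha$ on $\Ga(\alpha)$ then gives that $G$ is arc-transitive.

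For~(i), I would prove that the connected component of $\alpha$ is exactly $\{Hx:x\in K\}$, where $K:=\braket{H,g}$. For one inclusion, along any walk $\alpha=Hx_0,Hx_1,\dots,Hx_n$ the consecutive cosets satisfy $x_{i+1}x_i^{-1}\in HgH$ or $x_{i+1}x_i^{-1}\in Hg^{-1}H$, hence $x_{i+1}x_i^{-1}\in K$; telescoping with $x_0\in H$ yields $x_n\in K$. For the reverse inclusion, every $x\in K$ is a product of elements of $H$ and of $g^{\pm1}$, and one induces on the length of such a word, using that $(Hw,Hgw)$ and $(Hg^{-1}w,Hw)$ are arcs for every $w\in G$. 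Hence $\Ga$ is connected if and only if $[K{:}H]=[G{:}H]$, that is, $K=G$.

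For~(ii), distinct $(H,H)$-double cosets are disjoint and $(HgH)^{-1}=Hg^{-1}H$. If $HgH=Hg^{-1}H$ then $yx^{-1}\in HgH$ forces $xy^{-1}=(yx^{-1})^{-1}\in Hg^{-1}H=HgH$, so $\Ga$ is undirected; conversely $(H,Hg)$ is always an arc, so if $\Ga$ is undirected then $(Hg,H)$ is an arc, giving $g^{-1}\in HgH$ and therefore $Hg^{-1}H=HgH$. For~(iii), since $G$ is vertex- and arc-transitive, the standard criterion says that $\Ga$ is $(G,2)$-arc-transitive exactly when $G_\alpha=H$ is $2$-transitive on $\Ga(\alpha)$: any $2$-arc can be carried by $G$ so that its central vertex is $\alpha$, and then $H$ must map the pair of end-vertices onto any ordered pair of distinct vertices of $\Ga(\alpha)$. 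It then remains to identify the $H$-set $\Ga(\alpha)=\{Hgh:h\in H\}$ with $[H{:}H\cap H^g]$: the $H$-stabilizer of $Hg$ equals $\{h\in H:ghg^{-1}\in H\}=H\cap g^{-1}Hg=H\cap H^g$, so $h(H\cap H^g)\mapsto Hgh$ is an $H$-equivariant bijection, and transporting $2$-transitivity across it gives~(iii).

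The computations will be routine; I expect the only points needing care to be the direction conventions in~(ii) — the arc set is a priori merely a relation on $[G{:}H]$, not a symmetric one — and, in~(iii), invoking the correct equivalence between $2$-arc-transitivity of $G$ and $2$-transitivity of the vertex stabilizers on their neighbourhoods. Neither constitutes a genuine obstacle, which is why the statement is regarded as folklore.
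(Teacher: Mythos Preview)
The paper does not supply its own proof of this lemma; it simply records it as folklore without argument. Your proof is correct and is precisely the standard justification one would give. The only slip is cosmetic: in part~(iii) the map you write as $h(H\cap H^g)\mapsto Hgh$ should use right cosets $(H\cap H^g)h$ rather than left cosets, since $Hgh_1=Hgh_2$ is equivalent to $h_1h_2^{-1}\in H\cap H^g$, not to $h_2^{-1}h_1\in H\cap H^g$; with that adjustment the $H$-equivariant bijection (for the right-multiplication action on both sides) and the remainder of the argument go through exactly as written.
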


The following example illustrates the method of coset graph construction.

\begin{example}\label{ex:Wong-graph}
{\rm 
Let $p$ be a prime such that $p\equiv \pm1$ $(\mod 16)$, and let $T=\PSL(2,p)$. 
Consider a maximal subgroups $H\cong \S_4$ of $T$  and a subgroup $K\cong \D_8$ of $H$. 
Choose an element $g\in \N_T(K)\setminus H$ such that $g^2 \in K$.
Then $\l H,g\r=T$ as $H$ is a maximal subgroup of $T$, and $H\cap H^{g}=K$.
Thus $\Sig=\Cos(G,H,HgH)$ is a connected cubic $(G,4)$-arc-transitive graph.
This graph was first constructed by Wong in \cite{Wong}. \qed
}
\end{example}

The following example provides a `central' cover with non-solvable transformation groups, which is `trivial' in the sense of group extension.

\begin{example}\label{ex:diag}
{\rm
Take distinct primes $p_1,p_2,\dots,p_n$ such that  $p_i\equiv\pm 1$ $(\mod 16)$.
Let $T_i=\PSL(2,p_i)$.
By Example~\ref{ex:Wong-graph}, there exist $H_i<T_i$ and $g_i\in T_i$ such that $H_i\cong \S_4$, $H_i\cap H_i^{g_i}\cong \D_8$ and $\Ga_i=\Cos(T_i,H_i,H_i g_i H_i)$ is a connected $4$-arc-transitive graph for each $i$. 
In particular, we may assume that $H_i\cap H_i^{g_i}=\langle x_i\rangle{:}\langle y_i\rangle\cong\ZZ_4{:}\ZZ_2$ and $(x_i,y_i)^{g_i}=(x_i,x_iy_i)$.
Let $\sigma_i:H_1\rightarrow H_i$ be isomorphisms such that $(x_1,y_1)^{\sigma_i}=(x_i,y_i)$.
Define $\Ga=\Cos(G,H,HgH)$, where $G=T_1\times T_2\times\dots\times T_n$, $H=\{(h,h^{\sigma_2} ,\dots,h^{\sigma_n})\mid h\in H_1\}\cong\S_4$ and $g=(g_1,g_2,\dots,g_n)$.
Then $H\cap H^g=\langle (x_1,x_2,\dots,x_n)\rangle{:}\langle(y_1,y_2,...,y_n)\rangle \cong \D_8$, and $\Ga=\Cos(G,H,HgH)$ is a 4-arc-transitive graph.
Moreover, the subgroup $\l H,g\r$ projects surjectively to each $T_i$, and so $\l H,g\r=G$ and $\Ga$ is connected.  \qed
}
\end{example}



To end this section, we discuss $(2,3)$-generated perfect groups, which are ingredients for constructing 2-arc-transitive covers of the Petersen graph.
As mentioned before, most finite simple groups are $(2,3)$-generated by Liebeck-Shalev's result.
The following example gives a pair of $(2,3)$-generators for $\PSL(2,p)$.

\begin{example}\label{const-PSL}
{\rm
Let $p \equiv 1 \pmod{4}$ be a prime, and let $T = \PSL(2, p)$. Define elements $a, b \in T$ by
\[
a = \begin{bmatrix} 0 & -1 \\ 1 & -1 \end{bmatrix}, \quad
b = \begin{bmatrix} 0 & 1 \\ -1 & 0 \end{bmatrix},
\]
Computation shows that $a^3 = b^2= 1$. 
Note that the products $ab = \begin{bmatrix} 1 & 0 \\ 1 & 1 \end{bmatrix}$ and $ba = \begin{bmatrix} 1 & -1 \\ 0 & 1 \end{bmatrix}$ both have order $p$. Since no proper subgroup of $\PSL(2, p)$ contains two distinct subgroups of order $p$, it follows that $T=\l a,b\r $. 
\qed }
\end{example}


The next example shows that the direct product of $d$ pairwise non-isomorphic simple $(2,3)$-generated groups is $(2,3)$-generated.

\begin{example}\label{DirectProductOfSimple}
{\rm
Let $d\ge2$ be an integer, and let $T_i$ with $1\le i\le d$ be pairwise non-isomorphic $(2,3)$-generated non-abelian simple groups. More specifically, assume
\[\mbox{$T_i=\l\mu_i,\lambda_i\r$, where $|\mu_i|=2$ and $|\lambda_i|=3$.}\]
Now, let $P=T_1\times \dots \times T_d$ and define $\lambda=(\lambda_1,\dots, \lambda_d)$ and $\mu=(\mu_1,\dots,\mu_d)$. It follows immediately that $|\lambda|=3$ and $|\mu|=2$. Moreover, as the projection of $\l \lambda, \mu\r$ on each of the direct factors is surjective and $T_i$ are pairwise non-isomorphic, we have $P=\l \lambda, \mu\r$ which is a $(2,3)$-generated perfect group.  
\qed
}
\end{example}

The final example of this section provides a method for constructing $\{2,3\}$-generated perfect groups by non-split group extensions.

\begin{example}\label{NonsplitExtension}
{\rm
Assume that $p>3$ is a prime and $\gcd(p(p-1),n)=1$. 
By Liebeck-Shalev's theorem, except for finitely many possible exceptions, $T=\PSL(n,p)$ is $(2,3)$-generated, say $T=\l a,b\r$ where $|a|=3$ and $|b|=2$.
By \cite{CLWZ:Irr-covers}, there exists a non-split extension $P=N.T=\ZZ_p^{n^2-1}.\PSL(n,p)$, where $N=\ZZ_p^{n^2-1}$ is a minimal normal subgroup of $P$.
Let $x,y$ be preimages of $a,b$, respectively, under $P\rightarrow T$.
Then $|x|=3$ or $3p$, and $|y|=2$ or $2p$, and so $|x^p|=3$ and $|y^p|=2$.
Since $N.T$ is a non-split extension and $N$ is a minimal normal subgroup of $P$, it follows that $P$ is perfect and $P=\l x^p,y^p\r$.
\qed
}
\end{example}

\def\calA{{\mathcal A}}

\section{Proof of Theorem~\ref{Main Theorem}}\label{sec:petersencover}

We first present a construction of 2-arc-transitive non-abelian covers of the Petersen graph.

\begin{construction}\label{cons-1}
{\rm
Let $P=\braket{\mu,\la}$ be a perfect group where $\mu^2=\la^3=1$.
Let $G=P\wr \A_5=P^5{:}\A_5$ be such that, for $(t_1,t_2,t_3,t_4,t_5)\in P^5$ and $\pi\in\A_5$,
\[(t_1,t_2,t_3,t_4,t_5)^{\pi^{-1}}=(t_{1^\pi}, t_{2^\pi}, t_{3^\pi}, t_{4^\pi}, t_{5^\pi}).\]
Pick $h_1,h_2,g\in G$ as follows:
\[\begin{array}{l}
	h_1=(1,1,1,\la,\la^{-1})(123),\\
	h_2=(\m,\m,\m,1,1)(12)(45),~\mbox{and}~ \\
	\ \, g=(1,\m,1,1,\m)(14)(25).
\end{array}\]
Let $H=\braket{h_1,h_2}$ and  $\Gamma=\Cos(G,H,HgH)$.
\qed
}
\end{construction}

We shall complete the proof of Theorem~\ref{Main Theorem} by a series of lemmas.

\begin{lemma}\label{lem:covers}
Using the notation defined in Construction~\ref{cons-1}, the following statements hold:
\begin{enumerate}[{\rm(1)}]
\item $H=\langle h_1\rangle{:}\l h_2\r=\S_3$, and $|g|=2$;
\item $H\cap H^g=\l h_2\r=\S_2$;
\item $\Gamma=\Cos(X,H,HgH)$ is a $2$-arc-transitive cubic graph;
\item $\Gamma$ is a normal cover of the Petersen graph.
\end{enumerate}
\end{lemma}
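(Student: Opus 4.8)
The plan is to establish the four items of Lemma~\ref{lem:covers} in order, since each builds on the previous. For part~(1), I would first compute $|h_1|$ and $|h_2|$ directly. Writing $h_1=(1,1,1,\la,\la^{-1})(123)$, one checks that $h_1^3=(1,1,1,\la,\la^{-1})(1,1,1,\la,\la^{-1})^{(123)}(1,1,1,\la,\la^{-1})^{(132)}$; since $(123)$ fixes $4$ and $5$, all three conjugates act on coordinates $4,5$ as $(\la,\la^{-1})$, so the product on those coordinates is $(\la^3,\la^{-3})=(1,1)$, giving $h_1^3=1$, and $h_1\neq1$, so $|h_1|=3$. Similarly $h_2^2=(\m,\m,\m,1,1)(\m,\m,\m,1,1)^{(12)(45)}=(\m^2,\m^2,\m^2,1,1)=1$, so $|h_2|=2$. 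Then I would compute $h_2^{-1}h_1h_2$ (or $h_1^{h_2}$) and verify it equals $h_1^{-1}$, which forces $\l h_1,h_2\r$ to be a quotient of $\S_3$; since $h_1,h_2$ have the right orders and $h_2\notin\l h_1\r$ (look at the $\A_5$-components: $(123)$ versus $(12)(45)$), we get $H=\l h_1\r{:}\l h_2\r\cong\S_3$. For $|g|=2$, the same kind of computation: $g^2=(1,\m,1,1,\m)(1,\m,1,1,\m)^{(14)(25)}$, and $(14)(25)$ sends the tuple $(1,\m,1,1,\m)$ to $(1,\m,1,1,\m)$ again (coordinate $2\mapsto5$ both carry $\m$, coordinates $1,3,4$ carry $1$), so $g^2=1$.

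For part~(2), I would first identify $H^g$ by conjugating the generators; the key point is to show $h_2^g=h_2$, i.e.\ $g$ centralizes $h_2$, which is a short direct computation comparing $P^5$-parts and $\A_5$-parts (note $(12)(45)$ and $(14)(25)$ commute in $\A_5$, and the $P^5$-parts interact correctly). This gives $\l h_2\r\le H\cap H^g$. Since $H\cong\S_3$ has only the subgroups $1,\ZZ_2,\ZZ_3,\S_3$, and $H\cap H^g$ cannot be all of $H$ (otherwise $g$ normalizes $H$, and one checks via the $\A_5$-component or the $P^5$-component this fails — e.g.\ $h_1^g$ is not in $H$), and cannot be $\ZZ_3=\l h_1\r$ (again by a component check on $h_1^g$), we conclude $H\cap H^g=\l h_2\r\cong\S_2$. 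Part~(3) is then immediate from Lemma~\ref{CosetGraph}: $\Gamma$ is cubic since $[H:H\cap H^g]=3$; it is undirected because $|g|=2$ forces $HgH=Hg^{-1}H$; and it is $(G,2)$-arc-transitive because $H\cong\S_3$ acts $2$-transitively on the $3$ cosets $[H:\l h_2\r]$ (the natural action of $\S_3$ on $3$ points).

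For part~(4), I would exhibit the covering projection explicitly. Let $N=P^5\lhd G$; then $N\cap H=1$ (the $P^5$-parts of $h_1,h_2$ generate inside $P^5$ but $H$ meets $N$ trivially because $H\cong\S_3$ projects isomorphically onto its image $\l(123),(12)(45)\r=\S_3\le\A_5$ — this needs the short check that the only element of $H$ with trivial $\A_5$-part is the identity), and $G/N\cong\A_5$. Mapping $h_1,h_2,g$ to their $\A_5$-components gives $(123),(12)(45),(14)(25)$, which by Example~\ref{ex:Petersen} are exactly the data defining the Petersen graph as $\Cos(\A_5,\S_3,\S_3(14)(25)\S_3)$. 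So $\Gamma_N$ is the Petersen graph, and since $N$ has $|P|^5/|N\cap H|$ orbits on vertices — in particular $\ge3$ orbits as $P\neq1$ — Praeger's theorem makes $\Gamma$ an $N$-cover of $\Sigma$. Finally, to see this is a \emph{normal} cover in the required sense I would note $\Gamma$ is connected (which is Lemma~\ref{cubic-connected}, proved separately, using that $P$ is perfect and $(2,3)$-generated so that $\l h_1,h_2,g\r$ surjects onto each $P$-factor) hence $G$ acts faithfully and $N\lhd G$ gives a genuine normal quotient. I expect the main obstacle to be part~(2) — specifically, pinning down $H^g$ and ruling out $H\cap H^g$ being larger than $\l h_2\r$ — since this requires carefully tracking how the transposition-type permutation $(14)(25)$ scrambles the $P^5$-coordinates of $h_1$ and confirming the resulting element lies outside $H$; the order and isomorphism-type computations in~(1) and the quotient identification in~(4) are routine by comparison.
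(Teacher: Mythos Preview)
Your proposal is correct and follows essentially the same approach as the paper's proof: direct order computations for part~(1), the commutation $gh_2=h_2g$ for part~(2), the $2$-transitivity of $\S_3$ on three points for part~(3), and passage to the quotient $G/N\cong\A_5$ together with Example~\ref{ex:Petersen} for part~(4). The only notable difference is that you are more careful in part~(2): the paper simply asserts $H\cap H^g=\langle h_2\rangle$ from the commutation, whereas you explicitly rule out $H\cap H^g=H$ by checking that $(14)(25)$ does not normalise $\langle(123),(12)(45)\rangle$ in $\A_5$ --- a detail the paper leaves implicit.
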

\proof
(1). It is easily shown that $h_1$ is of order 3, and $h_2,g$ are involutions.
Further, $h_1^{h_2}=h_1^{-1}$, and so $H=\l h_1,h_2\r=\S_3$.

(2). Since $h_2g=gh_2$, we obtain that $H\cap H^g=\l h_2\r=\S_2$.

(3). Obviously, $H=\S_3$ acts 2-transitively on $[H:H\cap H^g]$.
Thus $G$ acts 2-arc-transitive on the graph $\Gamma$.

(4). Note that $\overline X=X/N\cong \operatorname{Alt}\{1,2,3,4,5\}=\A_5$.
Let $\overline H$ and $\overline g$ be the homomorphic images of $H$ and $g$, respectively.
Then $\overline H=\l(123),(12)(45)\r=\S_3$, and $\ov g=(14)(25)$.
Then $\Gamma_N=\Cos(\ov X,\ov H,\ov H\ov g\ov H)$ is isomorphic to the Petersen graph by Example~\ref{ex:Petersen}.
\qed

Next, we will prove Theorem~\ref{Main Theorem} by a series of lemmas.

\begin{lemma}\label{ generated by commutators}
The perfect group $P=\l \m,\lambda\r$ is generated by the commutators $[\m,\la]$ and $[\m,\la^2]$.
\end{lemma}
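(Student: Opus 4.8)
The plan is to show that the normal subgroup $Q := \langle [\mu,\lambda],\,[\mu,\lambda^2]\rangle^P$ generated (as a normal subgroup) by these two commutators is in fact the whole of $P$, and then observe that it is automatically normal because it will turn out to be generated by conjugates that we can absorb. More directly: let $Q = \langle [\mu,\lambda],[\mu,\lambda^2]\rangle$ be the ordinary subgroup generated by the two commutators, and consider the quotient $\bar P = P/\langle Q^P\rangle$ — wait, rather, I would argue as follows. First I would pass to the abelianization to get intuition, but since $P$ is perfect that gives nothing; instead the key observation is that in the quotient group $\bar P := P / \langle\!\langle [\mu,\lambda],[\mu,\lambda^2]\rangle\!\rangle$ (normal closure), the images $\bar\mu,\bar\lambda$ satisfy $[\bar\mu,\bar\lambda]=1$, so $\bar P = \langle\bar\mu,\bar\lambda\rangle$ is a quotient of $\langle\mu\rangle\times\langle\lambda\rangle \cong \mathbb{Z}_2\times\mathbb{Z}_3$, hence abelian. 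But $\bar P$ is also a quotient of the perfect group $P$, hence perfect, hence trivial. Therefore $\langle\!\langle [\mu,\lambda],[\mu,\lambda^2]\rangle\!\rangle = P$.

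The remaining — and main — point is to upgrade "normal closure equals $P$" to "the subgroup $\langle [\mu,\lambda],[\mu,\lambda^2]\rangle$ (without taking normal closure) equals $P$." For this I would show that the subgroup $Q=\langle [\mu,\lambda],[\mu,\lambda^2]\rangle$ is already normal in $P$, which suffices since then $Q = \langle\!\langle \cdots \rangle\!\rangle = P$. To see $Q \lhd P$ it is enough to check that $Q$ is invariant under conjugation by the generators $\mu$ and $\lambda$. Write $c_1 = [\mu,\lambda] = \mu^{-1}\lambda^{-1}\mu\lambda$ and $c_2 = [\mu,\lambda^2]$. One computes the conjugates $c_1^{\mu}, c_1^{\lambda}, c_2^{\mu}, c_2^{\lambda}$ and expresses each as a word in $c_1, c_2$. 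These are short, mechanical identities in the free product $\mathbb{Z}_2 * \mathbb{Z}_3 = \langle\mu\rangle * \langle\lambda\rangle$ modulo nothing (they hold already at the level of the free product, using only $\mu^2=\lambda^3=1$): for instance $c_1^{\lambda} = \lambda^{-1}\mu^{-1}\lambda^{-1}\mu\lambda^2 = (\lambda^{-1}\mu^{-1}\lambda\mu)(\mu^{-1}\lambda^{-2}\mu\lambda^2) = c_1^{-1}\,c_2$ after simplification (I would verify the exact form), and similarly $c_1^{\mu} = c_1^{-1}$, while $c_2^{\mu}$ and $c_2^{\lambda}$ likewise land in $\langle c_1,c_2\rangle$. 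Once all four conjugates are confirmed to lie in $Q$, normality follows.

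The step I expect to be the genuine obstacle is precisely this verification that $Q$ is closed under conjugation by $\lambda$ — the conjugate $c_2^{\lambda} = [\mu,\lambda^2]^{\lambda} = [\mu^\lambda, 1] $ is delicate because $\lambda^2\cdot\lambda = \lambda^3 = 1$, so $c_2^\lambda = \lambda^{-1}\mu^{-1}\lambda^2\mu\lambda^2\cdot\lambda = \lambda^{-1}\mu^{-1}\lambda^{-1}\mu$, and I must rewrite this as a word in $c_1,c_2$; the bookkeeping with the relation $\lambda^3=1$ is where an error would most easily creep in. An alternative, cleaner route that avoids the normality computation entirely: work in the free product $F = \langle\mu\rangle * \langle\lambda\rangle$ and show directly that $\langle c_1, c_2\rangle$ has index $6$ in $F$ — indeed the commutator subgroup $[F,F]$ has infinite rank, but the subgroup $N$ we want is the kernel of $F \to \mathbb{Z}_2\times\mathbb{Z}_3$, which by Reidemeister–Schreier is free of rank $1 + 6(\frac{1}{2}+\frac{2}{3}-1) = $ (a finite number), and one checks $c_1, c_2$ generate it. I would use whichever of these is shorter in practice, but I anticipate the direct conjugation identities are the intended argument and the $\lambda$-conjugation of $c_2$ is the crux.
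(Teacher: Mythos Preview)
Your approach is exactly the paper's: show that $Q=\langle [\mu,\lambda],[\mu,\lambda^2]\rangle$ is normal in $P$ by checking invariance under conjugation by $\mu$ and $\lambda$, then observe $P/Q$ is abelian and hence trivial since $P$ is perfect. The paper records the explicit identities you were looking for: conjugation by $\mu$ inverts both generators (since $\mu^2=1$), while $[\mu,\lambda^2]^{\lambda}=[\lambda,\mu]=[\mu,\lambda]^{-1}$ and $[\mu,\lambda]^{\lambda}=[\mu,\lambda]^{-1}[\mu,\lambda^2]$. Incidentally, the conjugate you flagged as the ``genuine obstacle,'' namely $c_2^{\lambda}$, is actually the easier of the two $\lambda$-conjugates once you use $\lambda^3=1$; it is $c_1^{\lambda}$ that requires the extra factorisation.
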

\proof
Let $R=\braket{[\m,\la],[\m,\la^2]}$.
Since $|\mu|=2$,  conjugation by $\mu$ inverts both $[\m,\la]$ and $[\m,\la^2]$, implying that $R^\mu \leq R$.
Since $|\la|=3$,  it follows that $[\m,\la^2]^{\la}=[\la,\mu]=[\mu,\la]^{-1}$.
Note that $[\mu,\la]^\la=[\mu,\la]^{-1}[\mu,\la^2]$.  Consequently, $R^\la \leq R$, and thus $R\unlhd P$.
Since $P/R=\braket{\mu R,\la R}$ is abelian and $P$ is perfect, we conclude that $R=P$.
\qed

Let $G=\braket{h_1,h_2,g}$ . To prove that $\Gamma$ is connected, by Lemma~\ref{CosetGraph}, we only need to show $X=G$. We first state some properties of $G$ in the next lemma.
Write $N=P_1\times\cdots\times P_5$, where $P_i\cong P$ for $1\leq i\leq 5$.
Let $K=G\cap N$.

\begin{lemma}\label{property G}
Let  $S$ be a non-empty subset of $\{1,\dots,5\}$. Then the following statements hold.
\begin{itemize}
\item [(1)] the quotient group $G/K\cong \A_5$, and $G$ is $3$-transitive on $\{P_1,\dots,P_5\}$ by conjugation.
\item[(2)] If $|S|\leq 2$, then $K$ projects surjectively on $\prod_{i\in S}P_i$ for each $S$;
\end{itemize}
\end{lemma}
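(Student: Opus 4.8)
The plan is to prove the two parts in turn: part~(1) is a short consequence of the wreath-product structure, and part~(2) rests on one explicit element of $K$, namely $(h_1g)^5$, together with Lemma~\ref{ generated by commutators} and the perfectness of $P$.

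\emph{Part (1).} Restricting the natural projection $X\to X/N\cong\A_5$ to $G$ gives a surjection onto $\langle\ov h_1,\ov h_2,\ov g\rangle=\langle(123),(12)(45),(14)(25)\rangle$ with kernel $G\cap N=K$. As $(123)$ has order $3$, $(14)(25)$ has order $2$, and $(123)(14)(25)$ is a $5$-cycle, this image has order divisible by $30$ and therefore equals $\A_5$ (alternatively, this is the connectedness of the Petersen graph in Example~\ref{ex:Petersen} via Lemma~\ref{CosetGraph}(i)); hence $G/K\cong\A_5$. By the definition of $G=P\wr\A_5$ in Construction~\ref{cons-1}, conjugation in $X$ permutes the direct factors $P_1,\dots,P_5$ of $N$ exactly as $\A_5=X/N$ permutes $\{1,\dots,5\}$, so through $G/K\cong\A_5$ the conjugation action of $G$ on $\{P_1,\dots,P_5\}$ is the natural $3$-transitive action of $\A_5$ on five points.

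\emph{Part (2).} Since $G$ is $2$-transitive on $\{P_1,\dots,P_5\}$ by part~(1) and $K\lhd G$, a conjugating element of $G$ carries any singleton to $\{1\}$ and any pair to $\{1,2\}$ while fixing $K$, so it suffices to treat $S=\{1\}$ and $S=\{1,2\}$. Write $c=[\mu,\la]$, $d=[\mu,\la^2]$ and $e=\la[\mu,\la]\la^{-1}$. Because $\ov{h_1g}$ is a $5$-cycle we have $k:=(h_1g)^5\in K$, and writing $h_1g$ in base-group/permutation form as in Construction~\ref{cons-1} and computing $(h_1g)^5$ directly gives $k=(c,e,e,e,c^{-1})$. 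Routine conjugations (using $\mu e\mu=d$ and $\mu c\mu=c^{-1}$) then produce the further elements of $K$
\[k^{h_2}=(d,c^{-1},d,c^{-1},e),\qquad k^{g}=(e,c,e,c,d),\qquad k^{h_1^{-1}}=(e,e,c,\ast,\ast).\]
For $S=\{1\}$, the group $\pi_1(K)$ contains $\pi_1(k)=[\mu,\la]$ and $\pi_1(k^{h_2})=[\mu,\la^2]$, so $\pi_1(K)=P_1$ by Lemma~\ref{ generated by commutators}.

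For $S=\{1,2\}$, let $M=\pi_1\!\left(K\cap\ker\pi_2\right)$. As $K\cap\ker\pi_2=\ker\!\left(\pi_2|_K\right)\lhd K$ and $\pi_1(K)=P_1$, the subgroup $M$ is normal in $P$. Now $k^{g}k^{h_2}=(ed,1,ed,1,de)$ lies in $K\cap\ker\pi_2$, and $k\,(k^{h_1^{-1}})^{-1}$ lies in $K\cap\ker\pi_2$ with first coordinate $ce^{-1}$; hence $ed,\,ce^{-1}\in M$. Therefore in $P/M=\langle\ov{[\mu,\la]},\ov{[\mu,\la^2]}\rangle$ we have $\ov c=\ov e=\ov d^{-1}$, so $P/M=\langle\ov c\rangle$ is cyclic; since $P$ is perfect, $P/M=1$ and $M=P$. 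Consequently $P_1\times1\le\pi_{12}(K)$, and together with $\pi_2(\pi_{12}(K))=P_2$ this gives $\pi_{12}(K)=P_1\times P_2$; a general pair follows by $2$-transitivity as above. The place where care is needed — the main obstacle — is exactly this pair case: the obvious commutators of $h_1,h_2,g$ collapse to the identity (because $H\cong\S_3$ embeds in $G$ with $H\cap K=1$), and it is the single element $(h_1g)^5$, whose coordinates are controlled commutators of $\mu$ and $\la$, together with the normality of $\pi_1(K\cap\ker\pi_2)$ in the perfect group $P$, that forces surjectivity onto a whole pair.
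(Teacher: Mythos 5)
Your proof is correct and follows essentially the same route as the paper's: the key element is $(h_1g)^5$, whose coordinates are exactly the commutators of Lemma~\ref{ generated by commutators}, and the pair case is settled by observing that $\pi_1(K\cap\ker\pi_2)$ is a normal subgroup of the perfect group $P_1\cong P$ with cyclic quotient. One cosmetic remark: since $e=[\la^2,\m]=d^{-1}$ holds identically in $P$, your element $k^gk^{h_2}=(ed,1,ed,1,de)$ is in fact the identity and the relation $\ov{e}=\ov{d}^{-1}$ is automatic, so the single element $k(k^{h_1^{-1}})^{-1}$ — whose first coordinate $ce^{-1}=cd$ coincides with the paper's $[\m,\la][\m,\la^2]$ — already carries the whole argument.
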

\begin{proof}
(1). Since $G/K\cong GN/N=\braket{h_1N,h_2N,gN}\cong \braket{(123),(12)(45),(14)(25)}$, it follows that $G/K\cong \A_5$.
Because $\A_5$ is $3$-transitive on $5$ points under the natural action, we have part (1).

(2). We first present some computation results.
Let $x=(h_1g)^5$, $x_1=(\m,1,1,\la,\la^{-1}\m)$ and $x_2=(15234)$  .   Then
 $x=(x_1x_2)^5$ as $h_1g=x_1x_2$. Therefore,
	\begin{equation*}
		\begin{split}
			x&=(x_1x_2)^5=x_1\cdot (x_1)^{x_2^4}\cdot (x_1)^{x_2^3}\cdot (x_1)^{x_2^2}\cdot (x_1)^{x_2}\\
			&=(\m,1,1,\la,\la^{-1}\m)(\la^{-1}\m,1,\la,\m,1)(1,\la,\m,\la^{-1}\m,1)
			(1,\m,\la^{-1}\m,1,\la)(\la,\la^{-1}\m,1,1,\m)\\
			&=(\m\la^{-1}\m\la,\la\m\la^{-1}\m,\la\m\la^{-1}\m,\la\m\la^{-1}\m,\la^{-1}\m\la\m)\\
			&=([\m,\la],[\la^2,\m],[\la^2,\m],[\la^2,\m],[\la,\m]).\\
		\end{split}
	\end{equation*}
	Let $y=(xh_2)^2$. Since $\m$ is an involution, we have $\m[\m,\la^i]\m=[\la^i,\m]$ for $i\ge 0$. Therefore,
	\begin{align}
	y&=(([\m,\la]\m,[\la^2,\m]\m,[\la^2,\m]\m,[\la^2,\m],[\la,\m])(12)(45))^2 \notag\\
			&=([\m,\la]\m,[\la^2,\m]\m,[\la^2,\m]\m,[\la^2,\m],[\la,\m])([\la^2,\m]\m,[\m,\la]\m,[\la^2,\m]\m,[\la,\m],[\la^2,\m])\notag\\
			&=([\m,\la][\la^2,\m]^{\m},[\la^2,\m][\m,\la]^{\m},[\la^2,\m][\la^2,\m]^\m,[\la^2,\m][\la,\m],[\la,\m][\la^2,\m]) \notag\\
			&=([\m,\la][\m,\la^2],[\la^2,\m][\la,\m],1,[\la^2,\m][\la,\m],[\la,\m][\la^2,\m]) \label{eqn-y}
	\end{align}
	Let $z=x^{-1}y$. Then
	$z=([\m,\la^2],[\la,\m],[\m,\la^2],[\la,\m],[\la^2,\m])$.
	
To prove part (2), by the action of $G$ on $\{P_1,\dots,P_5\}$, we only need to show that the projections of $K$ on $P_1$ and $P_1\times P_3$ are surjective.
By Lemma~\ref{ generated by commutators}, the subgroup $\braket{x,y}$ projects surjectively $P_1$, and so does $K$.
Therefore, $K$ projects surjectively on each $P_i$ for $1\le i\le 5$.

Let $\widetilde{K}$ be the projection of $K$ on $P_1 \times P_3$, and let $\pi_1$ and $\pi_3$ denote the projection maps of $\widetilde{K}$ into the first and third direct product, respectively. Then we have $ \pi_1(\widetilde{K}) \cong \pi_3(\widetilde{K}) \cong P $.
Let $\widetilde{P_1}$ and $\widetilde{P_3}$ be the projection of $P_1$ and $P_3$, respectively. 
Note that $\widetilde{K}\cap \widetilde{P}_1 ={\ker \pi_3}=\pi_1(\ker \pi_3)\lhd \pi_1(\widetilde{K})=\widetilde{P}_1$.
As shown in Equation~\ref{eqn-y}, the projection of $y$ is given by $ ([\mu, \lambda][\mu, \lambda^2], 1) $, which lies in $ \widetilde{K} \cap \widetilde{P_1} $. By Lemma~\ref{ generated by commutators}, we have $ P = \langle [\mu, \lambda], [\mu, \lambda][\mu, \lambda^2] \rangle $, implying that $ \widetilde{P_1}/(\widetilde{K} \cap \widetilde{P_1}) $ is cyclic.
Since $P$ is perfect, it follows that $ \widetilde{K} \cap \widetilde{P_1} = \widetilde{P_1}$. Consequently, we conclude that $ \widetilde{K} = \widetilde{P_1} \times \widetilde{P_3}$.
\end{proof}

We fix the following notation to streamline the proofs of the subsequent lemmas.
For any normal subgroup $Q$ of $P$ and $1\leq i\leq 5$, let $Q_i\lhd P_i$ be the corresponding subgroup with $Q_i\cong Q$ such that the conjugation action of $X$ on the set $\{Q_1,\dots,Q_5\}$ is transitive. Denote the direct product $Q_1\times\cdots\times Q_5$ by $Q^5$. 

\begin{lemma}\label{cubic-connected}
The graph $\Gamma$ is connected, and $X=\braket{g,h_1}$ is also a $(2,3)$-generated perfect group. 
\end{lemma}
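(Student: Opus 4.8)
The plan is to show $X = \langle g, h_1 \rangle = G$, which by Lemma~\ref{CosetGraph}(i) gives connectedness of $\Gamma$; the fact that $X$ is perfect and $(2,3)$-generated will then follow quickly. Recall $X \le G = P^5{:}\A_5$, and set $K = X \cap N$ where $N = P^5 = P_1 \times \cdots \times P_5$. By Lemma~\ref{property G}(1), $X$ covers the top quotient $\A_5$, so it suffices to prove $K = N$; equivalently, since $N = P^5$, it suffices to show that $K$ projects onto the full direct product $P^5$. We already know from Lemma~\ref{property G}(2) that $K$ projects surjectively onto any single coordinate $P_i$ and onto any pair $P_i \times P_j$. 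The standard tool here is Goursat's lemma / the classical subdirect subgroup argument: a subgroup of a product of (copies of) a perfect group that projects onto every coordinate and whose projections onto pairs are surjective must, under a suitable ``no common composition factor on overlaps'' hypothesis, be the whole product. So the real content is to push the $2$-coordinate surjectivity up to $5$-coordinate surjectivity.

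The key steps, in order, are as follows. First, I would record that it suffices to prove $K = P^5$. Second, I would set up the subdirect-product analysis: let $\widehat K$ be the image of $K$ in $P^5$ (in fact $K \le P^5$ already); consider the projection $\rho_S$ to $\prod_{i \in S} P_i$ for subsets $S$, and let $S$ be a maximal subset on which $K$ does \emph{not} project onto the full product. By Lemma~\ref{property G}(2) we have $|S| \ge 3$. Pick $j \in S$ and write $S = S' \cup \{j\}$. By maximality, $K$ projects onto $\prod_{i \in S'} P_i$. Third, I would analyze the kernel of the map $\rho_S(K) \to \prod_{i \in S'} P_i$: this kernel is a normal subgroup of $P_j$ (identifying $P_j$ with the $j$-th factor), and by Goursat the ``defect'' is governed by a common quotient of $\rho_{S'}(K)$ and $P_j$. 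Since $K$ already projects onto the pair $\{i,j\}$ for each $i \in S'$, the only way $\rho_S(K)$ can fail to be everything is if there is a nontrivial common section shared across the overlap. Here I would invoke that $P$ is perfect together with the $3$-transitivity of $X$ on $\{P_1,\dots,P_5\}$ (Lemma~\ref{property G}(1)): the $3$-transitive action lets us ``move'' any relation among three coordinates to any other triple, and perfectness kills abelian obstructions; combining these forces $\rho_S(K) = \prod_{i\in S} P_i$, contradicting the choice of $S$. Hence no such $S$ exists, $K = P^5$, and $X = G$.

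Finally, for the last assertions: $X = G = P \wr \A_5 = P^5{:}\A_5$ with $P$ perfect and $\A_5$ perfect, so $G$ is perfect (a split extension of a perfect group by a perfect group acting with each factor perfect is perfect — concretely $G' \ge [P^5, \A_5]$ together with $(P^5)' $ and $\A_5' = \A_5$ generate $G$). And $X = \langle g, h_1 \rangle$ with $|h_1| = 3$ (Lemma~\ref{lem:covers}(1)) and $|g| = 2$ (Lemma~\ref{lem:covers}(1)), so $X$ is $(2,3)$-generated by construction. Connectedness of $\Gamma$ is then immediate from Lemma~\ref{CosetGraph}(i).

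I expect the main obstacle to be the middle step: the clean bookkeeping of the subdirect-product / Goursat argument showing that pairwise-surjectivity plus $3$-transitivity plus perfectness of $P$ forces full surjectivity onto $P^5$. One has to be careful that $P$ need not be simple (it is only assumed perfect and $(2,3)$-generated), so one cannot simply quote a ``product of simple groups'' lemma; the argument must genuinely use that $X$ permutes the five factors $3$-transitively to transport any Goursat-type identification between a pair (or triple) of coordinates around, and must use perfectness of $P$ (via the ``perfect $\Rightarrow$ no cyclic quotient'' trick already exploited in the proof of Lemma~\ref{property G}(2)) to rule out the remaining diagonal subgroups. Once that is in place, everything else is routine.
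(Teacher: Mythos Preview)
Your overall strategy matches the paper's: reduce connectedness to $K=N$, then bootstrap from the pairwise surjectivity of Lemma~\ref{property G}(2) up to full surjectivity onto $P^5$. But the crucial middle step is where your sketch and the paper diverge, and your version has a real gap.

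First, a small slip: you want $S$ \emph{minimal} (not maximal) with the property that $K$ fails to surject onto $\prod_{i\in S}P_i$; only then does removing a point give a set on which $K$ does surject. Second, Lemma~\ref{property G} is proved for $\langle h_1,h_2,g\rangle$, not for $\langle g,h_1\rangle$ (the element $y$ there involves $h_2$); the clean route, which the paper takes, is to first show $\langle h_1,h_2,g\rangle=G$, then use that $h_2$ normalises $\langle g,h_1\rangle$ and that $G$ is perfect to conclude $\langle g,h_1\rangle=G$.

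The substantive issue is your Goursat step. You write that ``perfectness kills abelian obstructions'' and that ``$3$-transitivity lets us move relations around'', but you do not say how these combine to force $\rho_S(K)=\prod_{i\in S}P_i$. Since $P$ is only assumed perfect, not simple or centreless, one cannot exclude a priori a diagonal-type subdirect subgroup of $P^3$ surjecting onto every pair. The paper handles this not by a direct Goursat argument but by an induction on $|P|$: take $P$ a minimal counterexample, so that for every nontrivial $M\lhd P$ the construction works over $P/M$, giving $N=KM^5$. Two consequences are then exploited. First, applying this with $M=Z(P)$ and using $N'=N$ forces $Z(P)=1$. Second, in the triple-surjectivity argument, whenever a nontrivial normal subgroup $D\lhd P$ appears inside a projection one gets $D^5K=N$, which is exactly what is needed to reach a contradiction. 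The centreless condition is then used explicitly: a normal subgroup of $\widetilde P_1\times\widetilde P_2$ meeting $\widetilde P_1$ trivially must centralise $\widetilde P_1$, hence (as $Z(P)=1$) lie in $\widetilde P_2$. Your outline contains neither the inductive reduction nor the centreless step, and without them the subdirect argument does not close. Once three-coordinate surjectivity is established this way, the passage to $K=N$ proceeds as you indicate.
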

\begin{proof}
Let $P$ be a minimal counterexample for the statement $X\neq G$. Then for any nontrivial normal subgroup $M$ of $P$, the quotient group $P/M$ is perfect and generated by $\mu M$ and $\la M$.
Let $L=M^5$. Then $X/L\cong (P/M)\wr \A_5$, and by assumption, $X/L=GL/L$. Therefore, $X=GL$, and it follows that $$N=GL\cap N=(G\cap N)L=KM^5.$$

We claim that $Z=Z(P)$, the center of $P$, is trivial.
If $Z\neq 1$, From the discussion above, we have $N=Z^5K$. Thus, $N=N'=(Z^5K)'=K$, and hence $N=K$. Therefore, $X=G$, which contradicts that  $P$ is a counterexample. Hence, our claim holds.

{\bf \noindent Step 1: the projection of $K$ on $P_1\times P_2\times P_3$ is surjective}

Assume the projection of $K$ on $P_1\times P_2\times P_3$ is not surjective.
Let $U=P_1\times P_2\times P_3$ and $V=P_4\times P_5$ and  let $\widetilde{K}$ and $\widetilde{P_i}$ be the projections of $K$ and $P_i$ on $U$, where $1\leq i\leq 3$. Then $\widetilde{K}=KV/V$, $\widetilde{P_i}=P_iV/V$ and $KV<N$.
Let $\pi_{12}$ and $\pi_3$ denote the projection maps of $\widetilde{K}$ into $\widetilde{P}_1\times \widetilde{P}_2$ and  $\widetilde{P}_3$, respectively.
According to Lemma~\ref{property G}(2), $\pi_{12}(\widetilde{K}) =\widetilde{P}_1\times \widetilde{P}_2$ and $\pi_3(\widetilde{K}) =\widetilde{P}_3$.
Since $|\widetilde{P}_1\times \widetilde{P}_2|> |\widetilde{P}_3|$, 
\[|\widetilde{K} \cap (\widetilde{P}_1\times \widetilde{P}_2)| =|\ker \pi_3|>|\ker \pi_{12}| \geq 1.  \]
 This implies that $\widetilde{K}\cap (\widetilde{P_1}\times \widetilde{P_2})\neq 1$.

Suppose $\widetilde{K} \cap \widetilde{P_1} \neq 1$. Since $\widetilde{K}$ projects surjectively onto $\widetilde{P_2} \times \widetilde{P_3}$, it follows that $\widetilde{K} \cap \widetilde{P_1}$ is a nontrivial normal subgroup of $\widetilde{P_1}$.
Thus, there exists a nontrivial normal subgroup $D_1$ of $P_1$ such that $D_1 \leq KV$.
Recall that $D_i\in P_i$ and $D^5=D_1\times\cdots\times D_5$.
Considering that the stabilizer of $G$ on $\{P_4, P_5\}$ is transitive on $\{P_1, P_2, P_3\}$, we deduce that $D_1 \times D_2 \times D_3 \leq KV$.  Then $D^5\leq KV$ as $D_4 \times D_5 \leq V$, and hence $D^5K\leq KV$.
Noting that the assumption has led us to $N =D^5K$, which contradicts $KV<N$.

Therefore, we must have $\widetilde{K} \cap \widetilde{P_1} = 1$, and similarly, $\widetilde{K} \cap \widetilde{P_2} = 1$. Since $$(\widetilde{K} \cap (\widetilde{P_1} \times \widetilde{P_2})) \cap \widetilde{P_1} = \widetilde{K} \cap \widetilde{P_1} = 1,$$
and $\widetilde{K} \cap (\widetilde{P_1} \times \widetilde{P_2}) \lhd \widetilde{P_1} \times \widetilde{P_2}$,
it follows that $\widetilde{K} \cap (\widetilde{P_1} \times \widetilde{P_2})$ centralizes $\widetilde{P_1}$.  Recalling that $Z(\widetilde{P_1})=1$, the centralizer of $\widetilde{P_1}$ in $\widetilde{P_1} \times \widetilde{P_2}$ is $ \widetilde{P_2}$.  Thus $\widetilde{K} \cap (\widetilde{P_1} \times \widetilde{P_2}) \leq \widetilde{P_2}$.  By symmetry, $\widetilde{K} \cap (\widetilde{P_1} \times \widetilde{P_2}) \leq \widetilde{P_1}$.
Consequently, $\widetilde{K} \cap (\widetilde{P_1} \times \widetilde{P_2}) \leq \widetilde{P_1} \cap \widetilde{P_2} = 1$, leading to a contradiction.
Therefore, the projection of $K$ on $P_1 \times P_2 \times P_3$ is surjective.

{\bf \noindent Step 2: $X=G$}

Now, $N=U\times V$ and $K$ projects surjectively on both $U$ and $V$. Then $U/(K\cap U)\cong V/(K\cap V)$, and so $K\cap U\neq 1$ as $|U|>|V|$.
If $K\cap P_1\neq 1$, then $L:=\prod_{i=1}^{5}(K\cap P_i)$ is normal in $X$.
By our assumption, $X/L=G/L$, and hence $X=G$.
Now assume that $K\cap P_i=1$ for each $1\leq i\leq 5$.
Thus $K\cap U$ centralize $P_1$, and so $K\cap U\leq P_2\times P_3$.
Similar arguments for $K\cap P_2$ and $K\cap P_3$, we obtain that
$K\cap U$ is a subgroup of both $ P_1\times P_3$  and $P_1\times P_2$,
Consequently, $K\cap U=1$, which is a contradiction.
Thus $N=K$ and $X=G$, as we required.

By Lemma~\ref{lem:covers}, $h_2$ normalizes $\langle g,h_1\rangle$.
Then $X/\langle g,h_1\rangle\leq \ZZ_2$. As $X=P\wr \A_5$ is perfect, we thus conclude that $X=\langle g,h_1\rangle$.
Therefore, $X$ is a $(2,3)$-genereted group.
\end{proof}

\begin{lemma}\label{N:largestenormal}
Let $M$ be a normal subgroup of  $X$. then either $ M \leq N $ or $M=X$.
In particular, $N$ is a characteristic subgroup of $X$.
\end{lemma}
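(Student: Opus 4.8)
\emph{Proof strategy.} The plan is to reduce quickly to the case where $M$ supplements $N$, and then to derive a contradiction from the perfectness of the base group via the three subgroups lemma. First, if $M\leq N$ there is nothing to prove, so I assume $M\not\leq N$ and aim to show $M=X$. Since $X/N\cong\A_5$ is simple and $MN/N$ is a nontrivial normal subgroup of it, we get $MN=X$; hence it suffices to show $N\leq M$, because then $M/N\lhd X/N\cong\A_5$, and $M\neq N$ forces $M=X$.

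Next I would pass to a convenient quotient. Set $L=\prod_{i=1}^{5}(M\cap P_i)$. Each $M\cap P_i$ is normal in $P_i$, and since $X$ permutes $P_1,\dots,P_5$ transitively, $L$ is normal in $X$; plainly $L\leq M$, and $N/L\cong\prod_{i=1}^{5}P_i/(M\cap P_i)$ is again a direct product. Writing $\overline{\phantom{x}}$ for images in $\overline X=X/L$, a one-line coset computation (using $L\leq M$) gives $\overline M\cap\overline{P_i}=1$ for every $i$, so it is enough to prove $\overline N=1$, as then $N=L\leq M$. I would then assume $\overline N\neq 1$, so that every $\overline{P_i}\neq 1$ and $\overline N=\overline{P_1}\times\cdots\times\overline{P_5}$, and seek a contradiction.

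The core step is as follows. Let $\overline R=\overline M\cap\overline N$, normal in $\overline X$ and in $\overline N$. Since $\overline R\cap\overline{P_i}\leq\overline M\cap\overline{P_i}=1$ and $\overline R,\overline{P_i}\lhd\overline N$, we get $[\overline R,\overline{P_i}]=1$ for all $i$, hence $\overline R\leq Z(\overline N)$. Because $P$ is perfect, so are $N=P^{5}$ and $\overline N$; combining $[\overline M,\overline N]\leq\overline M\cap\overline N=\overline R\leq Z(\overline N)$ with the three subgroups lemma gives $[\overline N,\overline M]=[[\overline N,\overline N],\overline M]=1$, so $\overline M\leq\C_{\overline X}(\overline N)$. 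Writing $\overline X=\overline N A$ with $A$ the image of the standard complement $\A_5$ of $N$ in $X$, a nontrivial element of $A$ moves some nontrivial factor $\overline{P_i}$ to a different one, so $\C_{\overline X}(\overline N)=Z(\overline N)\leq\overline N$; thus $\overline M\leq\overline N$, contradicting $\overline M\,\overline N=\overline X$ and $\overline X/\overline N\cong\A_5\neq 1$. This proves $\overline N=1$ and hence $M=X$. The final assertion is then immediate: by the first part every proper normal subgroup of $X$ lies in $N$, so $N$ is the unique largest proper normal subgroup of $X$ and is therefore characteristic.

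The engine of the proof is that $N$ is perfect—this is what lets the three subgroups lemma annihilate $[\overline N,\overline M]$—and it is exactly here that the hypothesis on $P$ is indispensable (the statement is already false for $P=\ZZ_2$, where $X$ has a normal subgroup of index $2$ meeting $N$ properly). The part I expect to require the most care is the reduction modulo $L=\prod_i(M\cap P_i)$: one has to check that $L\lhd X$, that $\overline N$ remains the internal direct product of the $\overline{P_i}$ with $\overline M\cap\overline{P_i}=1$, and that the complement $\A_5$ genuinely descends to $\overline X$ as a complement, so that the centralizer computation $\C_{\overline X}(\overline N)=Z(\overline N)$ goes through.
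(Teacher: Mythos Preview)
Your proof is correct, and it follows a genuinely different line from the paper's. The paper first reduces, without loss of generality, to the case where $M$ is a \emph{maximal} normal subgroup of $X$; it then picks $m\in M$ and $n\in N$ with $mn=(12345)$, and uses conjugation by powers of $m$ to show that either all $P_i\leq M$ (giving $N\leq M$) or $Q_1=M\cap P_1$ is a maximal normal subgroup of $P_1$, in which case the argument is pushed into $X/Q^5\cong (P/Q)\wr\A_5$ with $P/Q$ simple, where the socle is the unique minimal normal subgroup. Your argument instead quotients by $L=\prod_i(M\cap P_i)$ at once (no maximality assumption on $M$), and then uses perfectness of $\overline N$ together with the three subgroups lemma to force $[\overline N,\overline M]=1$, which collides with the faithful permutation action of the complement $\A_5$ on the nontrivial factors $\overline{P_i}$.

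Both routes hinge on the same structural facts---transitivity of $\A_5$ on the five factors and perfectness of $P$---but exploit them differently: the paper leans on an explicit $5$-cycle and a reduction to the simple-base case, while yours is more conceptual and would work verbatim for $P\wr S$ whenever $S$ is simple and acts faithfully and transitively on the copies of $P$. The reductions you flagged as needing care (that $L\lhd X$, that $\overline M\cap\overline{P_i}=1$, that $\overline N$ is the internal direct product of the $\overline{P_i}$, and that the complement $\A_5$ descends to $\overline X$) are all routine and go through exactly as you indicate.
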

\begin{proof}
We only need to show that $M=X$ whenever $M\nleq N$. Suppose, for contradiction, that $M\neq N$ and $M<X$. Without loss of generality, assume that $M$ is a maximal normal subgroup of $X$.
Then $ 1 \neq MN/N \trianglelefteq X/N \cong A_5 $. Therefore, $ MN/N = A_5 $
and thus $ X = MN $.  Furthermore, there exist $m\in M$ and $n\in N$ such that $mn=(1 2 3 4 5)$.
If $P_1\not\leq M$, then let $Q_1=P_1\cap M$. For any normal subgroup $R_1$ of $P_1$ satisfying $Q_1< R_1\leq P_1$, we have $M/M \neq R_1M/M  \lhd NM/M=X/M $. By the maximality of $M$, it follows that $R_1M=X$. Therefore,
$$P_1=P_1\cap R_1M=R_1(P_1\cap M)=R_1Q_1=R_1,$$ and hence $Q_1$ is a maximal normal subgroup of $P_1$. Note that $$Q_{i+1}=Q_1^{m^i}\leq M^{m^i}\leq M.$$ It follows that $Q^5$ is a subgroup of $M$. Therefore, $M/Q^5$ is a proper normal subgroup of $X/L$ that is not contained in $N/L$. But $N/L$ is the unique minimal normal subgroup of $X/L\cong (P/Q)\wr \A_5$ as $P/Q$ is non-abelian simple, leading to a contradiction.
If $P_1\leq M$, then $P_{i+1}=P_1^{m^i}\leq M^{m^i}=M$. This gives $N=P^5\leq M$ and $X=MN=M$, a contradiction.

Let $\sigma\in \Aut(X)$. Then $N^\sigma\unlhd X^\sigma=X$. By the above argument, we have $N^\sigma=N$.
Thus $N$ is a characteristic subgroup of $X$.
\end{proof}

For a group $G$ and a subgroup $H$ of $G$, the \textit{core} of $H$ in $G$, denoted ${\sf Core}_G(H)$, is the intersection of all conjugates of $H$ in $G$. It is easy to see $G/{\sf Core}_G(H)$ is a transitive permutation group on the right coset $[G{:}H]$.
The following three lemmas determine the automorphism group of the graph $\Gamma$, its Cayleyness, and its girth.
	\begin{lemma}\label{Aut(Gamma)}
	$\Aut(\Gamma)=X.$
	\end{lemma}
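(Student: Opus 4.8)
The plan is to argue that the full automorphism group $A := \Aut(\Gamma)$ cannot properly contain $X$, by analysing how $X$ sits inside $A$ via the structure of the vertex stabiliser and the normal subgroup $N$. Since $\Gamma$ is a connected $2$-arc-transitive cubic graph, Tutte's theorem (Theorem~\ref{Tutte Theorem}) restricts the vertex stabiliser $A_\alpha$ to one of $\ZZ_3$, $\S_3$, $\S_3\times\S_2$, $\S_4$, $\S_4\times\S_2$; as $\Gamma$ is already $(X,2)$-arc-transitive with $X_\alpha = H \cong \S_3$, we have $A_\alpha \in \{\S_3,\ \S_3\times\S_2,\ \S_4,\ \S_4\times\S_2\}$, so $|A : X|\in\{1,2,4,8\}$ and $X \le A$ with small index. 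In particular $X$ is subnormal, hence normal, in $A$ after passing to the normal core; more precisely I would first replace $A$ by a subgroup in which $X$ is normal, using that a subgroup of index dividing $8$ in a group contains a normal subgroup of index dividing $8! $, but it is cleaner to argue directly: $X \lhd A$ because $X$ is generated by the full $2$-arc-stabiliser data and any bigger cubic arc-transitive group has $X$ as a normal subgroup of index $\le 2$ at each "level" of Tutte's hierarchy — I will spell this out using that $X \cap A_\alpha = H = \S_3$ is the stabiliser chain bottom.

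The key step is then: $N \lhd X \lhd A$ and by Lemma~\ref{N:largestenormal}, $N$ is characteristic in $X$, hence $N \lhd A$. Therefore $A$ acts on the normal quotient $\Gamma_N$, which is the Petersen graph (Lemma~\ref{lem:covers}(4)), and we get a homomorphism $A \to \Aut(\Gamma_N) = \S_5$ with kernel containing... precisely I want the kernel to be exactly the subgroup fixing every $N$-orbit setwise. Write $\overline A = A/N \le \S_5$. Since $X/N \cong \A_5 \le \overline A \le \S_5$, we have $\overline A \in \{\A_5,\ \S_5\}$. If $\overline A = \A_5$ then $A = NX = X$ (as $N \le X$), and we are done. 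So the whole problem reduces to excluding $\overline A = \S_5$, i.e. excluding the existence of an automorphism $\tau$ of $\Gamma$ inducing an odd permutation on the $5$ coordinate blocks.

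To rule out $\overline A = \S_5$, suppose such $\tau$ exists; then $A = N X \langle\tau\rangle$ projects onto $\S_5$, and $A$ normalises $N = P_1\times\cdots\times P_5$ permuting the factors $P_i$ as $\S_5$ does on $\{1,\dots,5\}$. Now I would examine the vertex stabiliser: $A_\alpha$ contains $H = \langle h_1,h_2\rangle \cong \S_3$ and, if $|A:X|\ge 2$, it is one of the larger Tutte groups, so $A_\alpha$ acts on $\Gamma(\alpha)$ (three vertices) with a kernel $A_\alpha^{[1]}$ of order $2^{k}$. The crucial arithmetic obstruction is that $A_\alpha$ must contain $H = \S_3$ with $H$ having the specific action on the coordinates given in Construction~\ref{cons-1} (namely $h_1$ permutes blocks as $(123)$, $h_2$ as $(12)(45)$), and enlarging $A_\alpha$ beyond $\S_3$ forces new elements normalising $N$ and fixing $\alpha = H$, which would have to lie in $\N_G(H)$-type positions inside the ambient symmetric group $\Sym(P\wr\A_5$ acting on cosets$)$ — and a direct check shows $\N_{A}(H)$ cannot grow because the defining product elements $h_1,h_2,g$ already rigidify the $P$-coordinates (the $\la,\la^{-1}$ and $\mu$ entries break any extra block-symmetry). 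The main obstacle I anticipate is precisely this last computation: showing no extra automorphism can both fix the base vertex, respect the $2$-arc structure, and extend the coordinate permutation to an odd one; I would handle it by explicitly computing $\N_G(H)$ and $H\cap H^g = \langle h_2\rangle$, checking that the only element interchanging $\alpha,\beta$ normalising $H\cap H^g$ up to the required conjugacy is $g$ itself (so the graph is determined and no "hidden" symmetry survives), and then invoking Lemma~\ref{N:largestenormal} once more to conclude $A_\alpha = H$, hence $A = X$.
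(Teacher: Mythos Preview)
Your overall architecture --- Tutte bound, then $N \lhd A$, then $A/N \le \Aut(\text{Petersen}) = \S_5$, then rule out index~$2$ --- is exactly the paper's, but both load-bearing steps are missing.

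\textbf{Getting $N \lhd A$.} You assert $X \lhd A$ on the grounds that ``any bigger cubic arc-transitive group has $X$ as a normal subgroup of index $\le 2$ at each level of Tutte's hierarchy''. That is not a valid argument: for instance if $A_\alpha \cong \S_4$ and $X_\alpha \cong \S_3$, then $X_\alpha$ is not normal in $A_\alpha$, so $X$ cannot be normal in $A$. The paper never claims $X \lhd A$. Instead it takes $C = \mathsf{core}_A(X)$. If $C = X$ then $X \lhd A$ and, since $N$ is characteristic in $X$ (Lemma~\ref{N:largestenormal}), $N \lhd A$. If $C < X$ then Lemma~\ref{N:largestenormal} forces $C \le N$, so $X/C$ has the perfect group $N/C$ as a normal subgroup with quotient $\A_5$; but $|A:X|\le 8$ gives $A/C \hookrightarrow \S_8$, and no subgroup of $\S_8$ has two non-abelian composition factors, whence $N/C = 1$ and $N = C \lhd A$. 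You mention the core in passing and then discard it for the incorrect direct claim; the core argument is in fact the whole point here.

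\textbf{Ruling out $|A:X| = 2$.} This is where the substance is, and your sketch (``the $\la,\la^{-1}$ and $\mu$ entries break any extra block-symmetry'', ``compute $\N_G(H)$'', ``the only element interchanging $\a,\b$ \dots is $g$ itself'') does not produce a contradiction; it is not clear what statement you would even be checking. The paper's argument is concrete and of a different flavour. If $|A:X|=2$ then $\Gamma$ is $(A,3)$-arc-transitive, so $A_{(u,v)} \cong \ZZ_2^2$ and $A_{\{u,v\}} \cong \D_8$. Pick $a \in A_{(u,v)} \setminus X_{(u,v)}$. Since $A_{(u,v)}$ is abelian, $h_2^a = h_2$; since $A_{\{u,v\}}$ is non-abelian and $a$ normalises $X_{\{u,v\}} = \langle h_2, g\rangle$, we get $g^a = gh_2$. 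Consequently $\C_N(g)^a = \C_N(gh_2)$. But a direct computation in $N = P^5$ shows $\C_N(g) \cong P^3$ while $\C_N(gh_2) \cong P^2 \times \C_P(\mu)$, and since $P$ is non-abelian $\C_P(\mu)$ is proper in $P$, so these centralisers have different orders --- contradiction. Nothing in your outline approaches this centraliser comparison; to complete your proof you would need either this computation or a genuinely different obstruction, and none is supplied.
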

\proof
Let $A=\Aut(\Gamma)$. Denote the vertices $H$ and $Hg$ in $\Gamma$ by $u$ and $ v$, respectively. Then the vertex-, edge-, arc- stabilizers are as follows: $X_u=\l h_1,h_2\r,$ $X_{\{u,v\}}=\l h_2, g\r$ and $X_{(u,v)}=\l h_2\r$.

We first claim that $N\lhd A$.
If  $X\lhd A$, then by Lemma~\ref{N:largestenormal}, $N$ is normal in $A$.
If $\mathsf{core}_A(X)<X$, by Lemma~\ref{N:largestenormal}, we have $\mathsf{core}_A(X) \leq N$, and
 $$
X/\mathsf{core}_A(X) = N/\mathsf{core}_A(X).X/N.
$$
By Theorem~\ref{Tutte Theorem}, $|A{:}X|=|A_u{:}X_u|\leq 8$. Then we have $A/\mathsf{core}_A(X)\leq \S_8$. Since $N$ is a perfect group, $N/\mathsf{core}_A(X)$ is also a perfect group. However, every subgroup of $\S_8$ has at most one non-solvable composition factor. Hence $N/\mathsf{core}_A(X)=1$, and thus $N=\mathsf{core}_A(X)\lhd A$.

By Lemma~\ref{lem:covers}, the quotient graph $\Gamma_N$ is the Petersen graph, so $A/N \leq \S_5$. Suppose $|A{:}X|=2$. Then $A/N \cong \S_5$, and $\Gamma$ is $3$-arc-transitive. By Theorem~\ref{Tutte Theorem}, we deduce that
 $A_{(u,v)} \cong \ZZ_2^2$ and $A_{\{u,v\}} \cong \D_8$ .
Choose $a \in A_{(u,v)} \setminus X_{(u,v)}$. Since $A_{(u,v)}$ is abelian, $h_2^a = h_2$. As $A_{\{u,v\}}$ is non-abelian, $g^a \neq g$. Since $a$ normalizes $X_{\{u,v\}}$, we conclude that $g^a = gh_2$.
Therefore,
$$
\C_N(g)^a = \C_{N^a}(g^a) = \C_N(gh_2).
$$
Let $n=(p_1,p_2,p_3,p_4,p_5)\in N$, where $p_i\in P$ for $1\leq i\leq 5$.
Direct computations give:
\begin{equation*}
	\begin{split}
		ng=&(p_1,p_2\mu,p_3,p_4,p_5 \mu)(14)(25)\\
		gn=&(p_4,\mu p_5, p_3, p_1,\mu p_2)(14)(25).\\
	\end{split}
\end{equation*}
If $n\in \C_N(g)$, then $p_4=p_1$ and $p_5=p_2^\mu$. This gives
\begin{equation}\label{eqn:cng}
    \C_N(g)= \{(p_1, p_2,p_3,p_1,p_2^\mu)\in N | p_1,p_2,p_3\in P\}
\end{equation}
Note that $h_2g=gh_2=(1,\mu,\mu,\mu,1)(15)(24)$.
Similar computations for $n(gh_2)$ and $(gh_2)n$, we obtain that
\begin{equation*}
	\begin{split}
		n(gh_2)=&(p_1,p_2\mu,p_3\mu,p_4\mu,p_5)(15)(24)\\
		(gh_2)n=&(p_5,\mu p_4, \mu p_3, \mu p_2,p_1)(15)(24).\\
	\end{split}
\end{equation*}
If $n\in \C_N(gh_2)$, then $p_5=p_1$, $p_4=p_2^\mu$ and $p_3$ centralizes $\mu$. This gives
\begin{equation}\label{eqn:cngh2}
	\C_N(gh_2)= \{(p_1, p_2,p_3,p_2^\mu,p_1)\in N | p_1,p_2\in P\mbox{ and } p_3\in \C_P(\mu)\}
\end{equation}

Equations~\ref{eqn:cng} and \ref{eqn:cngh2} imply that $\C_N(g)\cong P^3$ and $\C_N(gh_2)\cong P^2\times \C_P(\mu)$.
Since $P=\l \lambda, \mu\r$ is non-abelian, $\C_P(\mu)$ is a proper subgroup of $P$. Consequently, $\C_N(g)\not\cong \C_N(gh_2)$, a contradiction. Hence, we have $A=X$.
\qed

\begin{rem}
    By Lemma~\ref{Aut(Gamma)}, $\Gamma$ is $2$-arc-transitive but not $3$-arc-transitive cover of the Petersen graph. Using Magma, it is not hard to find a subgroup $H\cong \S_3\times \S_2$ and an element $g$ in $\A_5\wr \S_5$ such that the coset graph $\Sigma=\Cos(\A_5\wr \S_5, H, HgH)$ is a $3$-arc-transitive minimal cover of the Petersen graph with transformation group $\A_5^5$. However, it does not seem easy to directly generalize this example to the case of an infinite family.
\end{rem}

\begin{lemma}\label{Non-Cayley}
$\Gamma$ is not a Cayley graph.
\end{lemma}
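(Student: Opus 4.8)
\emph{Proof proposal.} Recall that a graph is a Cayley graph if and only if its automorphism group contains a subgroup acting regularly on the vertex set. By Lemma~\ref{Aut(Gamma)} we have $\Aut(\Gamma)=X=P\wr\A_5$, and $V(\Gamma)=[X{:}H]$ with $H\cong\S_3$; since $H$ maps isomorphically onto $\ov H\cong\S_3\le\A_5$ (Lemma~\ref{lem:covers}) we have $H\cap N=1$, so $|V(\Gamma)|=|X{:}H|=10|P|^5$. Suppose, for a contradiction, that $R\le X$ acts regularly on $V(\Gamma)$. The first step is purely arithmetic: put $d=[RN{:}N]=[R{:}R\cap N]$, which is the order of some subgroup of $X/N\cong\A_5$. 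On the other hand $R\cap N\le N$ with $|N|=|P|^5$, so $|R\cap N|=10|P|^5/d$ must divide $|P|^5$, forcing $10\mid d$. As $\A_5$ has no subgroup of order $20$ or $30$, this leaves only $d=10$ or $d=60$; equivalently $[N{:}R\cap N]\in\{1,6\}$. I would treat the two cases separately.

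If $R\cap N=N$, i.e. $N\le R$, then $R/N$ is a subgroup of $X/N\cong\A_5$ of order $10$. Since $R$ is transitive on $V(\Gamma)$ it is transitive on the set of $N$-orbits, which by Lemma~\ref{lem:covers} is the vertex set of the Petersen graph $\Gamma_N$; a transitive group of order $10$ on $10$ points is regular, so this would exhibit the Petersen graph as a Cayley graph, which is false. (If one wishes to avoid quoting that fact, one can instead check that every $\D_{10}\le\A_5$ meets a point stabiliser $\ov H\cong\S_3$ of the Petersen graph nontrivially: a $\D_{10}$ cannot contain two of the three involutions of $\ov H$, as their product has order $3$, and counting incidences over the six copies of $\D_{10}$ shows each of them contains exactly one such involution; hence $R\cap H\ne1$ and $R$ cannot be transitive.)

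The substantive case is $[N{:}R\cap N]=6$. Here I set $M={\sf Core}_N(R\cap N)=\bigcap_{n\in N}(R\cap N)^n\lhd N$. Since $r^{-1}(R\cap N)r=R\cap N$ for $r\in R$ and $N\lhd X$, a routine reindexing of the intersection gives $M^r=M$ for all $r\in R$; as $RN=X$ in this case, $M\lhd X$. Moreover $N/M$ embeds into the symmetric group on the $6$ cosets $[N{:}R\cap N]$, is nontrivial (because $R\cap N\subsetneq N$), and is a quotient of the perfect group $N=P^5$, hence perfect. The only nontrivial perfect subgroups of $\S_6$ are $\A_5$ and $\A_6$, so $N/M$ is a nonabelian simple group.

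To finish I would exploit that $N=P_1\times\cdots\times P_5$. The images $P_iM/M$ are normal subgroups of the simple group $N/M$, so each is trivial or all of $N/M$; they cannot all be trivial (otherwise $N=M$), and since $X$ permutes $\{P_1,\dots,P_5\}$ transitively (Lemma~\ref{property G}) while $M\lhd X$, either all are trivial or all equal $N/M$. Hence $P_1M/M=P_2M/M=N/M$; but $[P_1,P_2]=1$ in $N$, so $[N/M,N/M]=1$, contradicting that $N/M$ is nonabelian. Therefore no regular subgroup of $X$ exists, and $\Gamma$ is not a Cayley graph. I expect the case $[N{:}R\cap N]=6$ to be the main obstacle — it amounts to ruling out a supplement $R$ of $N$ in $X$ with $R\cap N$ of index $6$ — and what makes it go through is precisely that $N$ is a direct power of a \emph{perfect} group on whose five direct factors $X$ acts transitively; the only genuinely external input is the non-Cayleyness of the Petersen graph itself.
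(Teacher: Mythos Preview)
Your argument is correct. The one slip is in your parenthetical alternative for Case~1: from $(R/N)\cap\ov H\ne1$ you cannot deduce $R\cap H\ne1$ in $X$ (the preimage of an element of $\ov H$ need not lie in $H$), but you do not need that---what you have shown is that $R/N$ meets a point stabiliser of $\Gamma_N$, so $R/N$ is not regular on $V(\Gamma_N)$, hence $R$ is not transitive on $V(\Gamma)$. The main line of the argument is unaffected.

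The paper's proof is shorter and takes a different route. Rather than split on $[N:R\cap N]$, it considers ${\sf Core}_X(R)$ directly: since $|X{:}R|=6$ one has $X/{\sf Core}_X(R)\le\S_6$, and by Lemma~\ref{N:largestenormal} one has ${\sf Core}_X(R)\le N$, so $X/{\sf Core}_X(R)$ is an extension of the perfect group $N/{\sf Core}_X(R)$ by $\A_5$. A subgroup of $\S_6$ with $\A_5$ as quotient has order at most $720$, so $|N/{\sf Core}_X(R)|\le12$; being perfect, it is trivial, whence $N\le R$, and one finishes with the non-Cayleyness of the Petersen graph exactly as in your Case~1. Thus the paper collapses your two cases into one via a pure order bound, at the cost of invoking Lemma~\ref{N:largestenormal}. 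Your treatment of Case~2, by contrast, exploits the specific structure $N=P_1\times\cdots\times P_5$ with $X$ transitive on the factors to force any simple quotient of $N$ to be abelian; this avoids Lemma~\ref{N:largestenormal} and would also work for other base groups that are direct powers of perfect groups permuted transitively by the top group.
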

\begin{proof}
Assume $\Gamma$ is a Cayley graph. By Lemma~\ref{Aut(Gamma)}, $\mathsf{Aut}(\Gamma) = X$ contains a vertex-regular subgroup $K$. Since $\Gamma_N$ is the Petersen graph, $|K| = 10|N|$ and $|X{:}K| = 6$,  so $X/\mathsf{core}_X(K) \leq \S_6$. By Lemma~\ref{N:largestenormal}, we have $\mathsf{core}_X(K) \leq N$.
Now, since $X/\mathsf{core}_X(K) \cong (N/\mathsf{core}_X(K)) . (X/N)$ and $N/\mathsf{core}_X(K)$ is perfect, this forces $N = \mathsf{core}_X(K)$. It follows that $N < K$. Thus, $K/N$ acts regularly on $\Gamma_N$. This contradicts that the Petersen graph has no vertex regular subgroup. Hence, $\Gamma$ is not a Cayley graph.
\end{proof}

 \begin{lemma}
  The graph $\Gamma$ has girth 10.
 \end{lemma}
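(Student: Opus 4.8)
The plan is to determine the girth through the word description of cycles in a coset graph. Since $\Gamma$ is connected (Lemma~\ref{cubic-connected}) and $N$ acts semiregularly on $V(\Gamma)$ — because $H\cap N=1$, as $H\cong\S_3$ maps isomorphically onto its image $\overline H\le\A_5$ — the projection $\Gamma\to\Gamma_N$ is a genuine graph covering with deck group $N=P^5$, whose base is the Petersen graph $\Sigma:=\Gamma_N$ by Lemma~\ref{lem:covers}(4). A reduced closed walk of $\Sigma$ based at $\alpha=\overline H$ of length $n$ corresponds to a word $w=g s_{n-1}g s_{n-2}\cdots g s_0$ with each $s_i\in\{h_1,h_1^{-1}\}$ and $\overline w\in\overline H$; it lifts to a cycle of $\Gamma$ precisely when $w\in H$, equivalently when the $N$-component of $w$ (its voltage) is trivial. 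So $\mathrm{girth}(\Gamma)$ is the shortest length of such a word with vanishing voltage.

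The first step is an elementary observation: in any graph of girth $g$, a reduced closed walk of length $<2g$ is a cycle — choosing a repeated vertex with minimal index gap, reducedness excludes gaps $1$ and $2$, so one obtains a genuine cycle (of length $\ge g$), while the complementary sub-walk has length $<g$ and, after deleting the single backtrack it can acquire at the seam (again forbidden at that position), is a reduced closed walk of length $<g$, hence empty. Since the Petersen graph has girth $5$ and cycle spectrum $\{5,6,8,9\}$ (no $7$-cycle, non-Hamiltonian), every cycle of $\Gamma$ of length $\ell<10$ projects to a cycle of $\Sigma$, so $\ell\in\{5,6,8,9\}$. As $X/N\cong\A_5$ acts arc-transitively on $\Sigma$ and preserves the covering, it suffices to rule out one $\overline H$-orbit representative among the $5$-, $6$-, $8$- and $9$-cycles through $\alpha$ — only finitely many, a handful of cases. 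For each representative I would write out its word, reduce the $N$-component to a $5$-tuple of words in $\m,\la$ using only $\m^2=\la^3=1$ (the bookkeeping of Lemma~\ref{property G}), and exhibit a coordinate equal, up to conjugacy, to one of $[\m,\la]^{\pm1}$, $[\m,\la^2]^{\pm1}$, $[\m,\la][\m,\la^2]$. Each of these is nontrivial for every nontrivial perfect $P$: $[\m,\la]=1$ would force $P$ abelian, and $[\m,\la][\m,\la^2]=1$ would force $P=\braket{[\m,\la]}$ cyclic, against Lemma~\ref{ generated by commutators}. Hence no such cycle lifts. The case $\ell=5$ is already contained in Lemma~\ref{property G}: the word $(gh_1)^5$ equals $x^{g}$, where $x=([\m,\la],[\la^2,\m],[\la^2,\m],[\la^2,\m],[\la,\m])$ has third coordinate $[\la^2,\m]\ne1$. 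This gives $\mathrm{girth}(\Gamma)\ge10$.

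For the reverse inequality I would produce a single reduced closed walk of $\Sigma$ of length $10$ whose voltage is trivial for every $P$, i.e. a length-$10$ word in $g,h_1^{\pm1}$ lying in $H$ whose $N$-component collapses to the identity using only $\m^2=\la^3=1$. The relevant algebraic inputs are the relation $x^{h_2}=(x^{g})^{-1}$ (a short extension of the computations in Lemma~\ref{property G}) together with the equivariance $\mathrm{vol}(L^{\overline h})=\mathrm{vol}(L)^{h}$ for $\overline h\in\overline H$; from these one obtains, by combining two short loops at $\alpha$ and, if necessary, rerouting to remove backtracks, a reduced length-$10$ walk with voltage $(x^{g})^{-1}x^{g}=1$ freely, whose lift is then a $10$-cycle of $\Gamma$ by the bound just established. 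Combining the two estimates yields $\mathrm{girth}(\Gamma)=10$.

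The main obstacle is this last step: pinning down an explicit length-$10$ walk with freely trivial voltage and verifying it has no backtracks. By contrast, the $\overline H$-orbit enumeration and the voltage calculations for the $6$-, $8$- and $9$-cycles in the lower bound are routine but tedious elaborations of the computations already carried out in Lemma~\ref{property G}.
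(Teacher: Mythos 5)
Your overall strategy is the same as the paper's: for the lower bound, reduce to finitely many short closed walks based at $\alpha$, compute their voltages (the $N$-components of the corresponding words in $g$ and $h_1$), and use perfectness of $P$ to show these are nontrivial; for the upper bound, exhibit a $10$-cycle. Your refinement of the lower bound is genuinely nicer than the paper's: since a cycle of $\Gamma$ of length $\ell<10=2\cdot\mathrm{girth}(\Sigma)$ projects to a cycle (not merely a reduced closed walk) of the Petersen graph, $\ell\in\{5,6,8,9\}$, and transitivity of $X/N\cong\A_5$ on the cycles of each such length cuts the check to a handful of representatives, whereas Table~\ref{tab:list} must dispose of $22$ subscript sequences including length-$7$ and non-cycle closed walks. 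Your predicted list of obstructions is also essentially right: every framed entry of Table~\ref{tab:list} is, up to conjugacy and inversion in $\ZZ_2*\ZZ_3$, one of $\m$, $[\m,\la]$, $[\m,\la^2]$, $[\m,\la][\m,\la^2]$ (for instance the paper's $[\la,\la^\m]$ is a cyclic rearrangement of $[\m,\la][\m,\la^2]$, so the paper's separate argument via $P=\langle\la,\la^\m\rangle$ is avoidable), and your observation that $[\m,\la][\m,\la^2]=1$ would make $\langle[\m,\la],[\m,\la^2]\rangle=P$ cyclic is a valid use of Lemma~\ref{ generated by commutators}; you only need to add $\m\neq1$ (else $P=\langle\la\rangle$ is abelian) to your list.

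There are, however, two genuine gaps. First, the lower bound is only announced, not proved: you assert what the voltage computations for the $6$-, $8$- and $9$-cycle representatives ``would'' yield without performing them, and that computation is the substance of the argument. Second, and more seriously, the $10$-cycle is missing, and the mechanism you sketch does not deliver it as stated. Concatenating two loops at $\alpha$ with mutually inverse voltages gives a closed walk of $\Sigma$ that revisits $\alpha$ at time $5$; its lift is a $10$-cycle of $\Gamma$ only if the common voltage is nontrivial \emph{and} the eight intermediate lifted vertices are pairwise distinct, neither of which is ``free''. (Note also that no reduced closed $10$-walk of $\Sigma$ is a cycle, since the Petersen graph has no $10$-cycles, so the upper bound cannot come from lifting a cycle of $\Sigma$; the distinctness of the ten lifted vertices genuinely has to be checked upstairs.) The paper closes exactly this point by direct computation: with $g_1=g$ and $g_2=g^{h_1}$ one verifies $(g_2g_1)^5=1$ and $\langle g_1,g_2\rangle\cap H=1$, so the walk $v_i=Hy_i\cdots y_1$ with $y_{2i-1}=g_2$, $y_{2i}=g_1$ has ten distinct vertices and closes up. Until you supply both computations, neither inequality on the girth is established.
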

 \begin{proof}
 We denote the images of elements, subgroups, or subsets in $X$ under the natural homomorphism to $X/N$ by placing a bar over them.
 It is convenient to use the same symbol to denote the graph homomorphism from $\Gamma$ to $\Gamma_N$, which maps each vertex $Hx \in V\Gamma$ to $\overline{Hx}=\overline{H}\overline{x}=\braket{(123),(12)(45)} \overline{x}$.

    Set $g_1=g, g_2=g^{h_1}$ and $g_3=g^{h_1^2}$. It is easily seen that $HgH$ is a disjoint union of $Hg_i$ for $1\leq i\leq 3$. Thus, for any vertex $Hx\in V\Gamma$, the neighbourhood of $Hx$ is $\{Hg_ix| 1\leq i\leq 3\}$. It follows that $(v_0, v_1,\dots, v_s)$ is an $s$-arc of $\Gamma$, if and only if there is an element $x\in G$ and a sequence $(y_1, y_2,\dots, y_s)$ where $y_i\in \{g_1, g_2, g_3\}$ such that
    \begin{align*}
    v_0&=Hx,& v_1&=Hy_1x, &&\dots &v_i&=Hy_iy_{i-1}\dots y_1x,&& \dots  &v_s&=Hy_sy_{s-1}\dots y_1x.
    \end{align*}
    and $y_i\neq y_{i+1}$ for $1\leq i \leq s-1$.
    Note that an $s$-arc  $(v_0, v_1,\dots, v_s)$  is a cycle of length $s$ if and only if $v_s=v_0$ and $v_0, v_1,\dots, v_{s-1}$ are pairwise distinct.

    We first construct a $10$-cycle in $\Gamma$, which implies that the girth of $\Gamma$ is less than or equal to $10$.
    By calculation, we have $\braket{g_1,g_2}=\braket{g_1g_2}{:}\braket{g_1}\cong D_{10}$ and $\braket{g_1,g_2}\cap \braket{h_1,h_2}=1$.
    Now, set $y_{2i-1}=g_2, y_{2i}=g_1$ for $1\leq i\leq 5$, and let $v_0=H$, and $v_i=Hy_iy_{i-1}\dots y_1(1\leq i \leq 10)$ be vertices in $\Gamma$. Then
    \[v_{10}=H(g_2g_1)^5=H((g_1g_2)^{-1})^5=H((g_1g_2)^{5})^{-1}=H=v_0.\]
    and $(v_0,v_1, \dots, v_{10})$ is a $10$-arc in $\Gamma$.
    If there are two integers $0\le i<j\le 9$ such that $v_i=v_j$, then we have $Hy_jy_{j-1}\dots y_{i+1}=H$. That is $x=y_jy_{j-1}\dots y_{i+1}$ is an element in  $\braket{g_1,g_2} \cap \braket{h_1,h_2}=1$. Hence $x=1$. As $0\le i<j\le 9$ and $y_{i+1}, \dots , y_j\in \{g_1, g_2\}$, this is impossible. Thence, $v_0, \dots, v_9$ are pairwise distinct, and $(v_0,v_1, \dots, v_{10})$ gives rise to a $10$-cycle in $\Gamma$.

    Now suppose that $\Gamma$ is of girth $s$ for some integer $3\leq s\leq 9$ and let  $(v_0, v_1, \dots, v_s)$ be an $s$-cycle in $\Gamma$. By the $2$-arc-transitivity of $\Gamma$, without loss of generality, we may assume $v_0=v_s=H$, $v_1=Hg_1$, $v_2=Hg_2g_1$. Moreover,
    there is a subscript sequence $(i_1, i_2, \dots, i_s)$,
    \begin{equation}\label{eqns1}
    i_j\in \{1,2,3\}(1\le j\le s),~ i_1=1,~ i_2=2,~\mbox{and}~ i_j\ne i_{j-1}(3\le j\le s),
    \end{equation}
    such that $v_j=Hg_{i_j}g_{i_{j-1}}\dots g_{i_1}(1\le j\le s)$.
    As $H=v_0=v_s=Hy_sy_{s-1}\dots y_1$,  $g_{i_1}g_{i_2}g_{i_3}\dots g_{i_s}=(g_{i_s}g_{i_{s-1}}\dots g_{i_1})^{-1}\in H$.
    This gives
    \begin{equation}\label{eqns2}
     \overline{g_{i_1}}\cdot \overline{g_{i_2}}\cdot \overline{g_{i_3}} \dots \overline{g_{i_s}} =\overline{g_{i_1}g_{i_2}g_{i_3}\dots g_{i_s}}
    \in \overline H=\braket{(123),(12)(45)}.
    \end{equation}
    Using MAGMA \cite{MAGMA}, it is easy to find all possible sequences $(i_1, i_2, \dots, i_s)$ satisfying conditions~(\ref{eqns1}) and (\ref{eqns2}) for $3\leq s\leq 9$.
    See the first column in Table~\ref{tab:list}. Further calculations allow us to uniquely rewrite the element $g_{i_1}g_{i_2}g_{i_3}\dots g_{i_s}$ in the form $nh$, where $n\in N$ and $h\in H$.  See the second and the third columns in Table~\ref{tab:list}. As $g_{i_1}g_{i_2}g_{i_3}\dots g_{i_s}\in H$, we have $n=1$.
   Note that $P=\braket{\mu, \lambda}$ is perfect group, and therefore the elements
     \[\mu, \lambda\mu\lambda^{-1}, [\lambda, \mu], [\mu, \lambda], [\mu, \lambda^{-1}]\]
     in $P$ are not the identity element.
Also, since
     \[P=\braket{\lambda,\lambda^\mu}=\braket{\lambda,(\lambda^{-1})^\mu}=\braket{\lambda^{-1},(\lambda^{-1})^\mu}\]
   we have the commutator $[\lambda,\lambda^\mu]=\lambda^{-1}\mu\lambda^{-1}\mu\lambda\mu\lambda\mu$, which is not the identity in $P$. Similarly,
    \[ [\lambda,(\lambda^{-1})^\mu]=\lambda^{-1}\mu\lambda\mu\lambda\mu\lambda^{-1}\mu~\mbox{and}~[(\lambda^{-1})^\mu,\lambda^{-1}]=\mu\lambda\mu\lambda\mu\lambda^{-1}\mu\lambda^{-1}.\]
    are also not the identity.
    It follows that the framed entries in Table~\ref{tab:list} are not identity. Thus, for each row in Table~\ref{tab:list}, we all have $n\neq 1$, a contradiction.
     Hence, the shortest cycle in $\Gamma$ is of length $10$, and $\Gamma$ is of girth $10$.
 \end{proof}
		\begin{table}[ht]
	\scriptsize
		\begin{tabular}{c| c c }
		
		\hline
			$(i_1, i_2, \dots, i_s) $& $n$ & $h$\\ \hline
    $( 1, 2, 1, 2, 1 )$&  $(\mu^\lambda , \qd{\mu},\mu^{\lambda\mu},\mu^\lambda,\mu)$   & $ h_2h_1$    \\
    $( 1, 2, 3, 1, 2 )$&$  (\qd{[\lambda,\mu]}, [\mu,\lambda^{-1}], [\mu,\lambda^{-1}], [\mu,\lambda^{-1}], [\mu, \lambda] )  $&$  h_1^2   $\\ \hline
    $( 1, 2, 1, 3, 1, 2 )$&$  (\lambda^{-1}\mu\lambda^{-1}\mu\lambda^{-1}\mu,\mu\lambda\mu\lambda^{-1}\mu,\qd{\mu},\lambda\mu\lambda^{-1},\mu\lambda\mu\lambda\mu\lambda)  $&$    h_2 $\\
    $( 1, 2, 3, 2, 1, 2 )$&$   (\qd{\mu},\mu\lambda\mu\lambda\mu\lambda,\mu\lambda\mu\lambda^{-1}\mu,\mu,\mu\lambda^{-1}\mu\lambda^{-1}\mu\lambda^{-1}) $&$   h_1h_2  $\\ \hline
    $( 1, 2, 1, 3, 2, 1, 2 )$&$  (\lambda^{-1}\mu\lambda^{-1}\mu\lambda^{-1},\mu\lambda\mu\lambda\mu\lambda\mu,\qd{[\mu, \lambda]},\lambda\mu\lambda^{-1}\mu,\mu\lambda\mu\lambda^{-1})   $&$   h_1  $\\
    $( 1, 2, 3, 2, 3, 2, 1 )$&$  (\lambda\mu\lambda^{-1},\mu\lambda\mu\lambda^{-1}\mu,\qd{\mu},\mu\lambda^{-1}\mu\lambda\mu,\mu\lambda^{-1}\mu\lambda\mu)  $&$   h_2  $\\ \hline
    $( 1, 2, 1, 2, 3, 1, 3, 1 )$&$   (\lambda^{-1}\mu\lambda\mu\lambda^{-1}\mu\lambda\mu ,\qd{[\mu, \lambda]},\mu\lambda^{-1}\mu\lambda,\lambda^{-1}\mu\lambda^{-1}\mu\lambda^{-1},\mu\lambda\mu\lambda^{-1})   $&$   h_1  $\\
    $( 1, 2, 1, 2, 3, 2, 1, 3 )$&$  (\lambda^{-1}\mu\lambda\mu\lambda\mu\lambda^{-1},\mu\lambda^{-1}\mu\lambda\mu,\mu\lambda^{-1}\mu\lambda^{-1}\mu\lambda^{-1},\qd{\mu},\mu\lambda\mu\lambda^{-1}\mu)  $&$  h_1h_2   $\\
    $( 1, 2, 1, 3, 2, 3, 2, 1 )$&$   (\lambda^{-1}\mu\lambda\mu,\qd{[\mu,\lambda^{-1}]},\mu\lambda^{-1}\mu\lambda^{-1}\mu\lambda^{-1}\mu,\lambda\mu\lambda\mu\lambda,\mu\lambda\mu\lambda^{-1}\mu\lambda\mu\lambda^{-1}) $&$  h_1^2   $\\
    $( 1, 2, 3, 1, 3, 1, 2, 3 )$&$ (\lambda^{-1}\mu\lambda^{-1}\mu\lambda^{-1},\mu\lambda\mu\lambda^{-1}\mu\lambda\mu\lambda^{-1},\mu\lambda\mu\lambda\mu\lambda\mu,\mu\lambda^{-1}\mu\lambda,\qd{[\mu,\lambda^{-1}]})   $&$  h_1    $\\
    $( 1, 2, 3, 1, 3, 2, 3, 2 )$&$  (\lambda^{-1}\mu\lambda,\mu\lambda\mu\lambda^{-1}\mu\lambda^{-1}\mu\lambda\mu,\mu\lambda\mu\lambda\mu\lambda,\mu\lambda\mu\lambda^{-1}\mu,\qd{\mu})    $&$  h_1^2h_2   $\\
    $( 1, 2, 3, 2, 3, 1, 2, 1 )$&$  (\lambda\mu\lambda^{-1}\mu,\mu\lambda\mu\lambda\mu\lambda\mu,\mu\lambda^{-1}\mu\lambda,\qd{[\mu,\lambda]},\mu\lambda^{-1}\mu\lambda\mu\lambda^{-1}\mu\lambda)  $&$ h_1     $\\ \hline
    $( 1, 2, 1, 2, 3, 1, 2, 1, 3 )$&$( \qd{[\lambda,\mu]} ,\mu\lambda^{-1}\mu\lambda^{-1}\mu\lambda^{-1}\mu,\mu\lambda^{-1}\mu\lambda^{-1}\mu\lambda^{-1}\mu,\lambda^{-1}\mu\lambda\mu,\mu\lambda\mu\lambda\mu\lambda\mu)    $&$   h_1^2   $\\
    $( 1, 2, 1, 2, 3, 2, 3, 1, 3 )$&$ (\qd{\lambda^{-1}\mu\lambda\mu\lambda\mu\lambda^{-1}\mu},1,\mu\lambda^{-1}\mu\lambda^{-1}\mu\lambda\mu\lambda,1,\mu\lambda\mu\lambda^{-1}\mu\lambda^{-1}\mu\lambda)     $&$  1   $\\
    $( 1, 2, 1, 3, 1, 3, 1, 2, 3 )$&$  (\lambda^{-1}\mu\lambda^{-1}\mu\lambda\mu\lambda,\mu\lambda\mu\lambda^{-1}\mu\lambda\mu\lambda^{-1}\mu,\mu\lambda\mu\lambda^{-1}\mu,\qd{\mu},\mu\lambda\mu\lambda^{-1}\mu)  	  $&$     h_1h_2 $\\
    $( 1, 2, 1, 3, 1, 3, 2, 3, 2 )$&$  (\qd{\lambda^{-1}\mu\lambda^{-1}\mu\lambda\mu\lambda\mu},\mu\lambda\mu\lambda^{-1}\mu\lambda^{-1}\mu\lambda,1,1,\mu\lambda\mu\lambda\mu\lambda^{-1}\mu\lambda^{-1}) 	  $&$  1   $\\
    $( 1, 2, 1, 3, 2, 3, 1, 2, 1 )$&$  (\qd{\mu},\mu\lambda\mu\lambda\mu\lambda,\mu\lambda^{-1}\mu\lambda^{-1}\mu\lambda^{-1},\lambda\mu\lambda\mu\lambda^{-1}\mu\lambda^{-1},\mu\lambda\mu\lambda^{-1}\mu)    $&$  h_1h_2   $\\
    $( 1, 2, 3, 1, 3, 1, 3, 2, 3 )$&$   (\lambda^{-1}\mu\lambda^{-1}\mu\lambda\mu\lambda,\mu\lambda\mu\lambda^{-1}\mu\lambda\mu\lambda^{-1}\mu,\mu\lambda\mu\lambda^{-1}\mu,\qd{\mu},\mu\lambda\mu\lambda^{-1}\mu) $&$   h_1h_2   $\\
    $( 1, 2, 3, 1, 3, 2, 1, 2, 3 )$&$   (\lambda^{-1}\mu\lambda^{-1}\mu\lambda^{-1}\mu,\mu\lambda\mu\lambda^{-1}\mu,\mu\lambda\mu\lambda\mu\lambda^{-1}\mu\lambda^{-1}\mu,\mu\lambda\mu\lambda\mu\lambda,\qd{\lambda\mu\lambda^{-1}})   $&$  h_2   $\\
    $( 1, 2, 3, 2, 1, 3, 1, 2, 3 )$&$  (\mu\lambda\mu\lambda^{-1}\mu,\mu\lambda\mu\lambda\mu\lambda^{-1}\mu\lambda^{-1}\mu,\mu\lambda\mu\lambda\mu\lambda,\mu\lambda^{-1}\mu\lambda^{-1}\mu\lambda^{-1},\qd{\mu})    $&$    h_1^2h_2  $\\
    $( 1, 2, 3, 2, 1, 3, 2, 3, 2 )$&$   (\qd{[\mu,\lambda]},\mu\lambda\mu\lambda\mu\lambda\mu,\mu\lambda\mu\lambda\mu\lambda\mu,\mu\lambda^{-1}\mu\lambda,\mu\lambda^{-1}\mu\lambda^{-1}\mu\lambda^{-1}\mu)   $&$    h_1   $\\
    $( 1, 2, 3, 2, 3, 1, 3, 1, 2 )$&$   (1,\qd{\mu\lambda\mu\lambda\mu\lambda^{-1}\mu\lambda^{-1}},1,\mu\lambda^{-1}\mu\lambda^{-1}\mu\lambda\mu\lambda,\mu\lambda^{-1}\mu\lambda\mu\lambda\mu\lambda^{-1})     $&$  1   $\\ \hline
		\end{tabular}

	\normalsize
		\vspace*{0.3cm}
		\caption{The subscript sequences $(i_1, i_2, \dots, i_s) $ and corresponding group elements $n\in N$ and $h\in H$ with $g_{i_1}g_{i_2}g_{i_3}\dots g_{i_s}=nh$.}
		\label{tab:list}
	\end{table}

\vskip0.2in
\thanks{{\bf Acknowledgements.}
This work was supported by the National Natural Science Foundation of China (No.1931005,12101518,11901512), the Fundamental Research Funds for the Central Universities (No. 20720210036, 20720240136), and the Natural Science Foundation of Yunnan Province (No. 202401AT070275). }

\bibliographystyle{abbrv}
\bibliography{reference}	
		
\end{document}